\newtheorem{thm}{Theorem}
\newtheorem{proof}{Proof}
\newcommand{\mR}{{\mathbb R}}
\newcommand{\mE}{{\mathbb E}}
\newcommand{\tr}{\operatorname{trace}}
\newcommand{\argmin}{\operatorname{argmin}}
\title{Optimal transport for Gaussian mixture models}
\author{Yongxin Chen, Tryphon T. Georgiou and Allen Tannenbaum
\thanks{Y.\ Chen is with the Department of Electrical and Computer Engineering, Iowa State University, IA; email: yongchen@iastate.edu}
\thanks{T.\ T. Georgiou is with the Department of Mechanical and Aerospace Engineering, University of California, Irvine, CA; email: tryphon@uci.edu}
\thanks{A.\ Tannenbaum is with the Departments of Computer Science and Applied Mathematics \& Statistics, Stony Brook University, NY; email: allen.tannenbaum@stonybrook.edu}}
\begin{document}

\maketitle

\begin{abstract}
We present an optimal mass transport framework on the space of Gaussian mixture models. These models are widely used in statistical inference.  We treat such models as elements on the submanifold of probability densities with Gaussian mixture structure and embed them to the density spaces equipped with Wasserstein metric. An equivalent view relates to discrete measures on the space of Gaussian densities. Our method leads to a natural way to compare, interpolate and average Gaussian mixture models, with low computational complexity. Different aspects of our framework are discussed and several examples are presented for illustration. The method represents a first attempt to study optimal transport problems for probability densities with specific structure that can be suitably exploited.
\end{abstract}

\section{Introduction}
A mixture model is a probabilistic model describing properties of populations with subpopulations. Formally, it is a mixture distribution with each component representing a subpopulation.  Mixture models are widely used in statistics in detecting subgroups, inferring properties of subpopulations, and many other areas \cite{MclPel04}. An important case of mixture models is the so-called Gaussian mixture model (GMM), which is simply a weighted average of several Gaussian distributions. Each Gaussian component stands for a subpopulation. The Gaussian mixture model is commonly used in applications due to its mathematical simplicity as well as efficient algorithms in inference (e.g., Expectation Maximization algorithm).

Optimal mass transport (OMT) is an old but very active research area dealing with probability densities. Starting from the original formulation of Monge \cite{Mon81}, and the relaxation of Kantorovich \cite{Kan42}, and following up with a sequence of important works \cite{Bre91,GanMcc96,Mcc97,JorKinOtt98,BenBre00,OttVil00}, the subject of OMT has become a powerful tool in mathematics, physics, economics, engineering and biology etc \cite{Eva99,HakTanAng04,MueKarKolTan13,CheGeoPav14e,CheGeoPav15b,Gal16,Che16}. Benefitting from the development of alternative algorithms \cite{AngHakTan03,Cut13,BenFroObe14,TabTri14,HabHor15,BenCarCut15,CheGeoPav15a}, OMT has recently found applications in data science \cite{MonMulCut16,ArjChiBot17}. Briefly, OMT deals with problems of transporting masses from an initial distribution to a terminal distribution in a mass preserving manner with minimum cost. When the unit cost is the square of the Euclidean distance, the OMT problem induces an extremely rich geometry for probability densities. It endows a Riemannian metric on the space of probability densities \cite{Otto,Vil03,Vil08}. This geometry enables us to compare, interpolate and average probability densities in a very natural way, which is in line with the needs in a range of applications.

Despite the inherent elegance and beauty OMT geometry,
transport on the entire manifold of probability densities is computationally expensive.
However, in many applications, the probability densities often have specific structure and may be parameterized \cite{AmaNag07}. Thus, we are motivated to study OMT on certain submanifolds of probability densities. To retain the nice properties of OMT, herein, we seek an explicit OMT framework on Gaussian mixture models. The extension to more general structured densities will be a future research topic.

This work is also partially motivated by problems in data science. Meaningful data are often of high dimension and always have some structure. Thus, they are not densely distributed in the high dimensional space. Instead, they usually live in a low dimensional sub-manifold. Besides, in many cases, the data are sparsely distributed among subgroups. The difference between data within a subgroup is way less significant than that between subgroups. For example, the differences between two dogs of the same species are expected to be smaller than that of different species. In such applications, mixture models are suitable, and therefore it is of importance to develop a mathematical framework that respects such data structure.

\section{Background on OMT}
We now give a very brief overview of OMT theory. We only cover materials that are related to the present work. We refer the reader to \cite{Vil03} for more details.

Consider two measures $\mu_0, \mu_1$ on ${\mathbb R}^n$ with equal total mass. Without loss of generality, we take $\mu_0$ and $\mu_1$ to be probability distributions.
In the original formulation of OMT, a transport map
\[
T\;:\;{\mathbb R}^n\to{\mathbb R}^n\;:\;x\mapsto T(x)
\]
is sought that specifies where mass $\mu_0(dx)$ at $x$ should be transported so as to match the final distribution in the sense that $T_\sharp \mu_0=\mu_1$, i.e.
$\mu_1$ is the ``push-forward'' of $\mu_0$ under $T$, meaning
\[
\mu_1(B)=\mu_0(T^{-1}(B))
\]
for every Borel set $B$ in $\mR^n$. Moreover, the map should achieve a minimum cost of transportation
\[
\int_{\mR^n} c(x,T(x))\mu_0(dx).
\]
Here, $c(x,y)$ represents the transportation cost per unit mass from point $x$ to $y$. In this paper we focus on the case when  $c(x,y)=\|x-y\|^2$. To ensure finite cost, it is standard to assume that $\mu_0$ and $\mu_1$ live in the space of probability densities with finite second moments, denoted by $P_2(\mR^n)$.

The dependence of the transportation cost on $T$ is highly nonlinear and a minimum may not exist in general. This fact complicated early analyses of the problem \cite{Vil03}. To circumvent this difficulty, Kantorovich presented a relaxed formulation in 1942. In this, instead of seeking a transport map, one seeks a joint distribution $\Pi(\mu_0,\mu_1)$ on $\mR^n\times\mR^n$, referred to as ``coupling" of $\mu_0$ and $\mu_1$, so that the marginals along the two coordinate directions coincide with $\mu_0$ and $\mu_1$, respectively.
Thus, in the Kantorovich formulation, we solve
    \begin{equation}\label{eq:OptTrans}
        \inf_{\pi\in\Pi(\mu_0,\mu_1)}\int_{\mR^n\times\mR^n}\|x-y\|^2\pi(dxdy).
    \end{equation}

For the case where $\mu_0,\mu_1$ are absolutely continuous with corresponding densities $\rho_0$ and $\rho_1$, it is a standard result that OMT (\ref{eq:OptTrans}) has a unique solution \cite{Bre91,Vil03,Vil08}. Moreover, the unique optimal transport $T$ is the gradient of a convex function $\phi$, i.e.,
	\begin{equation}\label{eq:optimalmap}
		y=T(x)=\nabla\phi(x).
	\end{equation}
Having the optimal mass transport map $T$, as in (\ref{eq:optimalmap}), the optimal coupling is
	\[
		\pi=({\rm Id}\times T)_\sharp \mu_0,
	\]
where ${\rm Id}$ stands for the identity map. The square root of the minimum of the cost defines a Riemannian metric on $P_2(\mR^n)$, known as the Wasserstein metric $W_2$ \cite{JorKinOtt98,Otto,Vil03,Vil08}. On this Riemannian-type manifold, the geodesic curve connecting $\mu_0$ and $\mu_1$ is given by
	\begin{equation}\label{eq:displacementinterp1}
		\mu_t=(T_t)_\sharp \mu_0,~~~T_t(x)=(1-t)x+tT(x),
	\end{equation}
which is called displacement interpolation. It satisfies
	\begin{equation}\label{eq:W2geodesic}
		W_2(\mu_s,\mu_t) = (t-s) W_2(\mu_0,\mu_1),\quad 0\le s< t\le 1.
	\end{equation}

\subsection{Gaussian marginal distributions}\label{sec:Gaussian}
When both of the marginals $\mu_0, \mu_1$ are Gaussian distributions, the problem can be greatly simplified \cite{Tak11}. In fact, a closed-form solution exists. Denote the mean and covariance of $\mu_i, i=0,1$ by $m_i$ and $\Sigma_i$, respectively. Let $X, Y$ be two Gaussian random vectors associated with $\mu_0, \mu_1$, respectively. Then the cost in (\ref{eq:OptTrans}) becomes
	\begin{equation}\label{eq:expectcost}
		\mE\{\|X-Y\|^2\} = \mE \{\|\tilde X-\tilde Y\|^2\} +\|m_0-m_1\|^2,
	\end{equation}
where $\tilde X = X-m_0, \tilde Y = Y-m_1$ are zero mean versions of $X$ and $Y$. We minimize \eqref{eq:expectcost} over all the possible Gaussian joint distributions between $X$ and $Y$. This gives
	\begin{equation}
	\min_S \left\{\|m_0-m_1\|^2+\tr(\Sigma_0+\Sigma_1-2S) ~\mid~
	\left[\begin{matrix}
	\Sigma_0 & S \\ S^T & \Sigma_1
	\end{matrix}\right]\ge 0\right\},
	\end{equation}
with $S=\mE \{\tilde X \tilde Y^T\}$. The constraint is semidefinite constraint, so the above problem is a semidefinite programming (SDP). It turns out that the minimum is achieved by the unique minimizer in closed-form
	\[
		S= \Sigma_0^{1/2}(\Sigma_0^{1/2}\Sigma_1\Sigma_0^{1/2})^{1/2}\Sigma_0^{-1/2}
	\]
with minimum value
	\[
		W_2(\mu_0,\mu_1)^2 = \|m_0-m_1\|^2+\tr(\Sigma_0+
		\Sigma_1-2(\Sigma_0^{1/2}\Sigma_1\Sigma_0^{1/2})^{1/2}).
	\]
The consequent displacement interpolation $\mu_t$ is a Gaussian distribution with mean $m_t = (1-t)m_0+tm_1$ and covariance
	\begin{equation}\label{eq:disinterpG}
		\Sigma_t = \Sigma_0^{-1/2} \left((1-t)\Sigma_0+t(\Sigma_0^{1/2}\Sigma_1\Sigma_0^{1/2})^{1/2}
		 \right)^2 \Sigma_0^{-1/2}.
	\end{equation}
	
The Wasserstein distance can be extended to singular Gaussian distributions by replacing the inverse by the pseudoinverse $\dagger$, which leads to
	\begin{equation}\label{eq:W2gaussian}
		W_2(\mu_0,\mu_1)^2 = \|m_0-m_1\|^2+\tr(\Sigma_0+
		\Sigma_1-2\Sigma_0^{1/2}((\Sigma_0^{1/2})^\dagger\Sigma_1(\Sigma_0^{1/2})^\dagger)^{1/2}\Sigma_0^{1/2}).
	\end{equation}
In particular, when $\Sigma_0=\Sigma_1=0$, we have
	\[
		W_2(\mu_0,\mu_1) = \|m_0-m_1\|,
	\]
implying that the Wasserstein space of Gaussian distributions, denoted by $G(\mR^n)$, is an extension, at least formally, of the Euclidean space $\mR^n$. 

\section{OMT for Gaussian mixture models}\label{sec:Gaussianmix}
A Gaussian mixture model is an important instance of mixture models, which are commonly used to study properties of populations with several subgroups. Mathematically, a Gaussian mixture model is a probability density consisting of several Gaussian components. Namely, it has the form
	\[
		\mu = p^1 \nu^1 + p^2 \nu^2 + \cdots+p^N \nu^N,
	\]
where each $\nu^k$ is a Gaussian distribution and $p=(p^1,p^2,\ldots,p^N)^T$ is a probability vector. Here the finite number $N$ stands for the number of components of $\mu$. We denote the space of Gaussian mixture distributions by $M(\mR^n)$.

As we have already seen in Section~\ref{sec:Gaussian}, the displacement interpolation of two Gaussian distributions remains Gaussian. This invariance, however, no longer holds for Gaussian mixtures. Yet, the mixture models may contain some physical or statistical features that we may want to retain. This gives rise to the following question we would like to address. How do we establish a geometry that inherits the nice properties of OMT and in the meantime keeps the Gaussian mixture structure?

Our approach relies on a different way of looking at Gaussian mixture models. Instead of treating the given mixture as a distribution on the Euclidean space $\mR^n$, we view it as a discrete distribution on the Wasserstein space of Gaussian distributions $G(\mR^n)$. A Gaussian mixture distribution is equivalent to a discrete measure, and therefore we can apply OMT theory to such discrete measures. We will see next that this strategy retains the Gaussian mixture structure.

Let $\mu_0, \mu_1$ be two Gaussian mixture models of the form
	\[
		\mu_i = p_i^1 \nu_i^1 + p_i^2 \nu_i^2 + \cdots+p_i^{N_i} \nu_i^{N_i},~~i=0, 1.
	\]
Here $N_0$ maybe different to $N_1$. The distribution $\mu_i$ is equivalent to a discrete measure $p_i$ with supports $\nu_i^1, \nu_i^2,\ldots,\nu_i^{N_i}$ for each $i=0,1$. Our framework is built on the discrete OMT problem
	\begin{equation}\label{eq:OMTdiscrete}
	\min_{\pi \in \Pi(p_0,p_1)} \sum_{i,j} c(i,j)\pi(i,j)
	\end{equation}
for these two discrete measures.
Here $\Pi(p_0,p_1)$ denote the space of joint distributions between $p_0$ and $p_1$. The cost $c(i,j)$ is taken to be the square of the Wasserstein metric on $G(\mR^n)$, that is,
	\[
		c(i,j) = W_2(\nu_0^i,\nu_1^j)^2.
	\]
By standard linear programming theory, the discrete OMT problem (\ref{eq:OMTdiscrete}) always has at least one solution. Let $\pi^*$ be a minimizer, and define
	\begin{equation}\label{eq:metric}
		d(\mu_0,\mu_1) = \sqrt{\sum_{i,j} c(i,j)\pi^*(i,j)}.
	\end{equation}
\begin{thm}\label{thm:metric}
$d(\cdot,\cdot)$ defines a metric on $M(\mR^n)$.
\end{thm}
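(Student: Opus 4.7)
The plan is to verify the four metric axioms (nonnegativity, symmetry, identity of indiscernibles, and the triangle inequality) for the functional $d$ on $M(\mR^n)$. Nonnegativity is immediate since $c(i,j) = W_2(\nu_0^i,\nu_1^j)^2 \ge 0$ and any coupling $\pi$ is a nonnegative measure. Symmetry follows by observing that if $\pi^*$ is optimal for $(\mu_0,\mu_1)$, then its transpose $(\pi^*)^T(i,j)=\pi^*(j,i)$ is admissible for $(\mu_1,\mu_0)$ with the same total cost (using $W_2(\nu_0^i,\nu_1^j)=W_2(\nu_1^j,\nu_0^i)$), and vice versa.

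For the identity of indiscernibles, if $\mu_0=\mu_1$ represented with the same components and weights, the diagonal coupling $\pi(i,j)=p_0^i\delta_{ij}$ yields $d=0$. Conversely, if $d(\mu_0,\mu_1)=0$, then the optimal $\pi^*$ satisfies $W_2(\nu_0^i,\nu_1^j)^2\pi^*(i,j)=0$ for every $(i,j)$, so mass flows only between coincident Gaussians; pushing forward then forces $\mu_0=\mu_1$ as Gaussian mixture distributions. A minor subtlety is that the mixture representation is not unique when some components repeat, but since $W_2$ separates Gaussians and mass is only exchanged between identical components, the argument works after collapsing repeated components, which does not change the value of $d$.

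The main work is the triangle inequality $d(\mu_0,\mu_2)\le d(\mu_0,\mu_1)+d(\mu_1,\mu_2)$. I would proceed via a discrete gluing construction. Let $\pi^*_{01}\in\Pi(p_0,p_1)$ and $\pi^*_{12}\in\Pi(p_1,p_2)$ be optimal. Define on the triple index set
\begin{equation*}
\pi_{012}(i,j,k) \;=\; \frac{\pi^*_{01}(i,j)\,\pi^*_{12}(j,k)}{p_1^j}
\end{equation*}
for indices with $p_1^j>0$ (and zero otherwise). Then $\pi_{012}$ has the prescribed pairwise marginals $\pi^*_{01}$ on the $(i,j)$-indices and $\pi^*_{12}$ on the $(j,k)$-indices. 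Its $(i,k)$-marginal $\pi_{02}$ is an admissible coupling of $p_0$ and $p_2$, so $d(\mu_0,\mu_2)^2\le\sum_{i,k}c(i,k)\pi_{02}(i,k)=\sum_{i,j,k}W_2(\nu_0^i,\nu_2^k)^2\pi_{012}(i,j,k)$.

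Now I apply the triangle inequality for $W_2$ on $G(\mR^n)$ pointwise, $W_2(\nu_0^i,\nu_2^k)\le W_2(\nu_0^i,\nu_1^j)+W_2(\nu_1^j,\nu_2^k)$, and then Minkowski's inequality in $L^2(\pi_{012})$ to obtain
\begin{equation*}
d(\mu_0,\mu_2) \;\le\; \sqrt{\sum_{i,j,k} W_2(\nu_0^i,\nu_1^j)^2 \pi_{012}(i,j,k)} + \sqrt{\sum_{i,j,k} W_2(\nu_1^j,\nu_2^k)^2 \pi_{012}(i,j,k)}.
\end{equation*}
The two marginal identities collapse the inner sums to $d(\mu_0,\mu_1)^2$ and $d(\mu_1,\mu_2)^2$, respectively, giving the desired bound. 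The main obstacle here is resisting the temptation to apply the triangle inequality directly to the squared cost (which would fail); the key is that the square root in the definition of $d$ matches the Minkowski exponent, exactly paralleling why $W_2$ itself is a metric on $P_2(\mR^n)$.
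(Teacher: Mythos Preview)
Your proof is correct and follows essentially the same route as the paper: the discrete gluing construction $\pi_{012}(i,j,k)=\pi^*_{01}(i,j)\pi^*_{12}(j,k)/p_1^j$, followed by the pointwise $W_2$ triangle inequality and Minkowski's inequality, is exactly the paper's argument. If anything you are more careful than the paper, which omits symmetry and glosses over the identity of indiscernibles and the non-uniqueness of mixture representations.
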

\begin{proof}
Apparently, $d(\mu_0,\mu_1)\ge 0$ for any $\mu_0,\mu_1\in M(\mR^n)$ and $d(\mu_0,\mu_1)=0$ if and only if $\mu_0=\mu_1$. We next prove the triangular inequality, namely,
	\[
		d(\mu_0,\mu_1) +d(\mu_1,\mu_2) \ge d(\mu_0,\mu_2)
	\]
for any $\mu_0, \mu_1, \mu_2 \in M(\mR^n)$. Denote the probability vector associated with $\mu_0, \mu_1, \mu_2$ by $p_0, p_1, p_2$ respectively. The Gaussian components of $\mu_i$ is denoted by $\nu_i^j$. Let $\pi_{01}$ ($\pi_{12}$) be the solution to \eqref{eq:OMTdiscrete} with marginals $\mu_0, \mu_1$ ($\mu_1,\mu_2$). Define $\pi_{02}$ by
	\[
		\pi_{02}(i,k) = \sum_j \frac{\pi_{01}(i,j)\pi_{12}(j,k)}{p_1^j}.
	\]
Clearly, $\pi_{02}$ is a joint distribution between $p_0$ and $p_2$, namely, $\pi_{02}\in\Pi(p_0,p_2)$. It follows from direct calculation
	\begin{eqnarray*}
		\sum_i \pi_{02}(i,k) &=& \sum_{i,j} \frac{\pi_{01}(i,j)\pi_{12}(j,k)}{p_1^j}
		\\&=& \sum_{j} \frac{p_2^j\pi_{12}(j,k)}{p_1^j}
		\\&=& p_2^k.
	\end{eqnarray*}
Similarly, we have $\sum_{k} \pi_{02}(i,k)=p_0^i$.
Therefore,
	\begin{eqnarray*}
		d(\mu_0,\mu_2) &\le& \sqrt{\sum_{i,k}  \pi_{02}(i,k)W_2(\nu_0^i,\nu_2^k)^2}
		\\&=&
		\sqrt{\sum_{i,j,k}  \frac{\pi_{01}(i,j)\pi_{12}(j,k)}{p_1^j}W_2(\nu_0^i,\nu_2^k)^2}
		\\&\le&
		\sqrt{\sum_{i,j,k}  \frac{\pi_{01}(i,j)\pi_{12}(j,k)}{p_1^j}(W_2(\nu_0^i,\nu_1^j)+W_2(\nu_1^i,\nu_2^k))^2}
		\\&\le&
		\sqrt{\sum_{i,j,k}  \frac{\pi_{01}(i,j)\pi_{12}(j,k)}{p_1^j}W_2(\nu_0^i,\nu_1^j)^2}+
		\sqrt{\sum_{i,j,k}  \frac{\pi_{01}(i,j)\pi_{12}(j,k)}{p_1^j}W_2(\nu_1^j,\nu_2^k)^2}
		\\&=&
		\sqrt{\sum_{i,j}  \pi_{01}(i,j)W_2(\nu_0^i,\nu_1^j)^2}+
		\sqrt{\sum_{j,k} \pi_{12}(j,k)W_2(\nu_1^j,\nu_2^k)^2}
		\\&=& d(\mu_0,\mu_1) +d(\mu_1,\mu_2).
	\end{eqnarray*}
In the above, the second inequality is due to the fact $W_2$ is a metric, and the third inequality is an application of the Minkowski inequality. 	
\end{proof}

\subsection{Geodesic}
A geodesic on $M(\mR^n)$ connecting $\mu_0$ and $\mu_1$ is given by
	\begin{equation}\label{eq:geodesic}
	\mu_t = \sum_{i,j} \pi^*(i,j) \nu_t^{ij},
	\end{equation}
where $\nu_t^{ij}$ is the displacement interpolation (see \eqref{eq:disinterpG}) between $\nu_0^i$ and $\nu_1^j$.
\begin{thm}
	\begin{equation}\label{eq:st}
		d(\mu_s,\mu_t) = (t-s)d(\mu_0,\mu_1),\quad 0\le s < t\le 1.
	\end{equation}
\end{thm}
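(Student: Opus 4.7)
The strategy is to obtain \eqref{eq:st} as a two-sided estimate: first an upper bound $d(\mu_s,\mu_t)\le(t-s)\,d(\mu_0,\mu_1)$ from an explicit coupling built out of $\pi^*$, and then a matching lower bound extracted from the triangle inequality established in Theorem~\ref{thm:metric}.

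\emph{Upper bound via a diagonal coupling.} Writing
\[
\mu_s=\sum_{i,j}\pi^*(i,j)\,\nu_s^{ij},\qquad \mu_t=\sum_{i,j}\pi^*(i,j)\,\nu_t^{ij},
\]
I view both sides as discrete measures on $G(\mR^n)$ with the \emph{same} weights $\pi^*(i,j)$ indexed by pairs $(i,j)$. The diagonal coupling
\[
\tilde\pi\bigl((i,j),(i',j')\bigr)=\pi^*(i,j)\,\mathbbm{1}_{(i,j)=(i',j')}
\]
is then admissible in \eqref{eq:OMTdiscrete} between $\mu_s$ and $\mu_t$. Since $\{\nu_\tau^{ij}\}_{\tau\in[0,1]}$ is itself a Wasserstein geodesic in $G(\mR^n)$ joining $\nu_0^i$ and $\nu_1^j$, applying \eqref{eq:W2geodesic} componentwise gives $W_2(\nu_s^{ij},\nu_t^{ij})=(t-s)W_2(\nu_0^i,\nu_1^j)$, so
\[
d(\mu_s,\mu_t)^2\le\sum_{i,j}\pi^*(i,j)\,W_2(\nu_s^{ij},\nu_t^{ij})^2=(t-s)^2\sum_{i,j}\pi^*(i,j)\,W_2(\nu_0^i,\nu_1^j)^2=(t-s)^2 d(\mu_0,\mu_1)^2.
\]

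\emph{Lower bound via triangle inequality.} Running the same upper-bound construction with $(s,t)$ replaced by $(0,s)$ and by $(t,1)$ (using the representations $\mu_0=\sum_{i,j}\pi^*(i,j)\nu_0^i$ and $\mu_1=\sum_{i,j}\pi^*(i,j)\nu_1^j$) yields
\[
d(\mu_0,\mu_s)\le s\,d(\mu_0,\mu_1),\qquad d(\mu_t,\mu_1)\le(1-t)\,d(\mu_0,\mu_1).
\]
Chaining with Theorem~\ref{thm:metric},
\[
d(\mu_0,\mu_1)\le d(\mu_0,\mu_s)+d(\mu_s,\mu_t)+d(\mu_t,\mu_1)\le s\,d(\mu_0,\mu_1)+d(\mu_s,\mu_t)+(1-t)\,d(\mu_0,\mu_1),
\]
and rearranging gives $d(\mu_s,\mu_t)\ge(t-s)\,d(\mu_0,\mu_1)$, matching the upper bound and yielding equality.

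\emph{Where the care is needed.} The only delicate point concerns representations: in the upper-bound argument I write $\mu_0$ and $\mu_1$ as mixtures with $N_0N_1$ components (many coinciding, since $\nu_0^{ij}$ depends only on $i$ and $\nu_1^{ij}$ only on $j$), whereas the definition of $d$ in \eqref{eq:OMTdiscrete}--\eqref{eq:metric} is phrased for the original $N_0$- and $N_1$-component representations. One therefore has to observe that the value of $d$ is invariant under such refinements — any coupling on a refined index set projects to an admissible coupling on the coarser one with the same transport cost (the cost depends only on the Gaussian components, not on the labels), and conversely any coarse coupling lifts. This bookkeeping, together with the fact that $\pi^*\in\Pi(p_0,p_1)$ guarantees $\sum_j\pi^*(i,j)=p_0^i$ and $\sum_i\pi^*(i,j)=p_1^j$, is the single nontrivial ingredient; once it is in hand the rest is just the computation above.
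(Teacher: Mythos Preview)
Your proof is correct and follows essentially the same route as the paper: an upper bound from the diagonal coupling $\pi^*$ together with \eqref{eq:W2geodesic}, and then the matching lower bound by chaining the triangle inequality (Theorem~\ref{thm:metric}) with the upper bounds on the three pieces. Your explicit remark about the representation-invariance of $d$ under refinement of the component labeling is a point the paper leaves implicit but is indeed needed to justify the first inequality.
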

\begin{proof}
For any $0\le s\le t\le1$, we have
	\begin{eqnarray*}
	d(\mu_s,\mu_t) &\le& \sqrt{\sum_{i,j}\pi^*(i,j)W_2(\nu_s^{ij},\nu_t^{ij})^2}
	\\&=&(t-s)\sqrt{\sum_{i,j}\pi^*(i,j)W_2(\nu_0^{i},\nu_1^{j})^2} = (t-s) d(\mu_0,\mu_1)
	\end{eqnarray*}
where we have used the property \eqref{eq:W2geodesic} of $W_2$. It follows that
	\[
		d(\mu_0,\mu_s)+d(\mu_s,\mu_t)+d(\mu_t,\mu_1) \le sd(\mu_0,\mu_1)+(t-s)d(\mu_0,\mu_1)+(1-t)d(\mu_0,\mu_1)
		=d(\mu_0,\mu_1).
	\]
On the other hand, by Theorem \ref{thm:metric}, we have
	\[
		d(\mu_0,\mu_s)+d(\mu_s,\mu_t)+d(\mu_t,\mu_1)\ge d(\mu_0,\mu_1).
	\]
Combining these two, we obtain \eqref{eq:st}.
\end{proof}
We remark that $\mu_t$ is a Gaussian mixture model since it is a weighted average of the Gaussian distributions $\nu_t^{ij}$.
Even though the solution to (\ref{eq:OMTdiscrete}) is not unique in some instances, it is unique for generic $\mu_0,\mu_1\in M(\mR^n)$. Therefore, in most real applications, we need not worry about the uniqueness.

\subsection{Relation between $d$ and $W_2$}
We first note that we have
	\[
		d(\mu_0,\mu_1) \ge W_2(\mu_0,\mu_1)
	\]
for any $\mu_0,\mu_1\in M(\mR^n)$.
Equality holds when both $\mu_0$ and $\mu_1$ have only one Gaussian component. In general, $d>W_2$. This is due to the fact that the restriction to the submanifold $M(\mR^n)$ induces sub-optimality in the transport plan.
Let $\gamma(t), 0\le t\le 1$ be any piecewise smooth curve  on $M(\mR^n)$ connecting $\mu_0$ and $\mu_1$. Define the Wasserstein length of $\gamma$ by
	\[
		L_W(\gamma) =\sup_{0=t_0<t_1<\cdots<t_s=1} \sum_{k} W_2(\gamma_{t_k},\gamma_{t_{k+1}}),
	\]
and natural length by
	\[
		L(\gamma) =\sup_{0=t_0<t_1<\cdots<t_s=1} \sum_{k} d(\gamma_{t_k},\gamma_{t_{k+1}}).
	\]
Then $L_W(\gamma) \le L(\gamma)$.

Using the metric property of $d$ we get
	\[
		d(\mu_0,\mu_1) \le \inf_\gamma L(\gamma),
	\]
where the minimization is taken over all the piecewise smooth curve  on $M(\mR^n)$ connecting $\mu_0$ and $\mu_1$. In view of \eqref{eq:st}, we conclude
	\[
		d(\mu_0,\mu_1) = \inf_\gamma L(\gamma) \ge \inf_\gamma L_W(\gamma).
	\]
Therefore, it is unclear whether$d$ is the restriction of $W_2$ to $M(\mR^n)$.

In general, $d$ is a very good approximation of $W_2$ if the variances of the Gaussian components are small compared with the differences between the means. This may lead to an efficient algorithm to approximate Wasserstein distance between two distributions with such properties.
If we want to compute the Wasserstein distance $W_2(\mu_0,\mu_1)$ between two distributions $\mu_0,\mu_1\in M(\mR^n)$, a standard procedure is discretizing the densities first, and then solving a discrete OMT problem. Depending upon the resolution of the discretization, the second step may become very costly. In contrast, to compute our new distance $d(\mu_0,\mu_1)$, we need only to solve (\ref{eq:OMTdiscrete}). When the number of Gaussian components of $\mu_0,\mu_1$ are small, this is extremely efficient.

\section{Barycenter of Gaussian mixtures}
The barycenter \cite{AguCar11} of $L$ distributions $\mu_0,\mu_1,\ldots,\mu_L$ is defined to be the minimizer of
	\begin{equation}
		J(\mu)= \frac{1}{L}\sum_{k=1}^L W_2(\mu,\mu_k)^2.
	\end{equation}
This resembles the average $\frac{1}{L}(x_1+x_2+\cdots+x_L)$ of $L$ points in the Euclidean space, which minimizes
	\[
		J(x) = \frac{1}{L}\sum_{k=1} \|x-x_k\|^2.
	\]
The above definition can be generalized to the cost
	\begin{equation}\label{eq:barycenter}
		\min_{\mu\in P_2(\mR^n)} \sum_{k=1}^L \lambda_kW_2(\mu,\mu_k)^2.
	\end{equation}
where $\lambda = [\lambda_1,\lambda_2,\ldots,\lambda_L]$ is a probability vector. The existence and uniqueness of \eqref{eq:barycenter} has been extensively studied in \cite{AguCar11} where it is shown that under some mild assumptions, the solution exists and is unique.

In the special case when all $\mu_k$ are Gaussian distributions, the barycenter remains Gaussian. In particular, denoting the mean and covariance of $\mu_k$ as $m_k, \Sigma_k$, then the barycenter has mean
	\begin{equation}
	m = \sum_{k=1}^L \lambda_k m_k
	\end{equation}
and covariance $\Sigma$ solving
	\begin{equation}\label{eq:barycov}
		\Sigma = \sum_{k=1}^L \lambda_k (\Sigma^{1/2} \Sigma_k\Sigma^{1/2})^{1/2}.
	\end{equation}
A fast algorithm to get the solution of \eqref{eq:barycov} is through the fixed point iteration \cite{AlvDelCueMat16}
	\[
		(\Sigma)_{\rm next} =
		 \Sigma^{-1/2}\left(\sum_{k=1}^L \lambda_k (\Sigma^{1/2} \Sigma_k\Sigma^{1/2})^{1/2}\right)^2\Sigma^{-1/2}.
	\]		
In practice, the iteration
	\[
		(\Sigma)_{\rm next} = \sum_{k=1}^L \lambda_k (\Sigma^{1/2} \Sigma_k\Sigma^{1/2})^{1/2}
	\]
appears to also work. However, no convergence proof for the latter is known at present \cite{AguCar11,AlvDelCueMat16}.

For general distributions, the barycenter problem \eqref{eq:barycenter} is difficult to solve. It can be reformulated as a multi-marginal optimal transport problem and is therefore convex. Recently several algorithms have been proposed to solve \eqref{eq:barycenter} through entropic regularization \cite{BenCarCut15}. However, due to the curse of dimensionality, solving such a problem for dimension greater than $3$ is still unrealistic. This is the case even for Gaussian mixture models. What's more, the Gaussian mixture structure is  often lost when solving problem \eqref{eq:barycenter}.

To overcome this issue for Gaussian mixtures, herein, we propose to solve a modified barycenter problem
	\begin{equation}\label{eq:baryGMM}
		\min_{\mu\in M(\mR^n)} \sum_{k=1}^L \lambda_kd(\mu,\mu_k)^2.
	\end{equation}
The optimization variable is restricted to be Gaussian mixture distribution and the Wasserstein distance $W_2$ is replaced by
its relaxed version \eqref{eq:metric}. Let $\mu_k$ be a Gaussian mixture distribution with $N_k$ components, namely, $\mu_k = p_k^1 \nu_k^1 + p_k^2 \nu_k^2 + \cdots+p_k^{N_k} \nu_k^{N_k}$. If we view $\mu$ as a discrete measure on $G(\mR^n)$, then clearly, it can only have support at the points (Gaussian distributions) of the form
	\begin{equation}\label{eq:barycomp}
		\argmin_{\nu} \sum_{k=1}^L \lambda_kW_2(\nu,\nu_k^{i_k})^2
	\end{equation}
with $\nu_k^{i_k}$ being any component of $\mu_k$. As we discussed before, the optimal $\nu$ is Gaussian. Denote the set of all such minimizers as $\{\nu^1,\nu^2,\ldots,\nu^N\}$, then $\mu$ is equal to
	\[
		\mu = p^1 \nu^1 + p^2 \nu^2+\cdots+p^N\nu^N,
	\]
for some probability vector $p=(p^1,p^2,\ldots,p^N)^T$.
The number of element $N$ is bounded above by $N_1N_2\cdots N_L$.
Finally, utilizing the definition of $d(\cdot,\cdot)$ we obtain an equivalent formulation of \eqref{eq:baryGMM}, which reads as
	\begin{subequations}\label{eq:linearbary}
	\begin{eqnarray}
		&&\min_{\pi_1\ge 0,\cdots,\pi_L\ge 0} \sum_{k=1}^L\sum_{i=1}^N\sum_{j_k=1}^{N_k} \lambda_k c_k(i,j_k) \pi_k(i,j_k)
		\\
		&& \sum_{i=1}^N \pi_k(i,j_k) = p_k^{j_k},\quad \forall 1\le k\le L, 1\le j_k \le N_k
		\\
		&& \sum_{j_1=1}^{N_1} \pi_1(i,j_1)= \sum_{j_2=1}^{N_2} \pi_2(i,j_2)=\cdots=
		 \sum_{j_L=1}^{N_L} \pi_L(i,j_L),\quad \forall 1\le i\le N.
	\end{eqnarray}
	\end{subequations}
The cost
	\begin{equation}\label{eq:unitcost}
		c_k(i,j) = W_2(\nu^i,\nu_k^j)^2
	\end{equation}
is the optimal transport cost from $\nu^i$ to $\nu_k^j$. After solving the above linear programming problem \eqref{eq:linearbary}, we get the barycenter $\mu=p^1 \nu^1 + p^2 \nu^2+\cdots+p^N\nu^N$ with
	\[
		p^i = \sum_{j=1}^{N_1} \pi_1(i,j)
	\]
for each $1\le i\le N$. We remark that our formulation is independent of the dimension of the underlying space $\mR^n$. The dimension affects only the computation of the cost function \eqref{eq:unitcost} where a closed-form \eqref{eq:W2gaussian} is available. The complexity of \eqref{eq:linearbary} relies on the numbers of components of the Gaussian mixtures distributions $\{\mu_k\}$. Therefore, our formulation is extremely efficiently for high dimensional Gaussian mixtures with small number of components.

The difficulty of formulation \eqref{eq:linearbary} lies on the number $N$ of components of the barycenter $\mu$, which is usually of order $N_1N_2\cdots N_L$. To overcome this issue, we can consider the barycenter problem for Gaussian mixture with specified components. More specifically, given 	$N$ Gaussian components $\nu^1, \nu^2, \ldots, \nu^N$, we would like to find a minimizer of the optimization problem \eqref{eq:barycenter} subject to the structure constraint that
	\[
		\mu = p^1 \nu^1 + p^2 \nu^2+\cdots+p^N\nu^N
	\]
for some probability vector $p=(p^1,p^2,\ldots,p^N)^T$. Note that $\nu^k$ here doesn't have to be of the form \eqref{eq:barycomp}. It can be any Gaussian distribution. Moreover, the number $N$ can be chosen to be small. It turns out  this problem can be solved in exactly the same way. Clearly, a linear programming reformulation \eqref{eq:linearbary} is straightforward.

\section{Numerical Examples}
Several examples are provided to illustrate our framework in computing distance, geodesic and barycenter.
\subsection{$d$ vs $W_2$}
To demonstrate the difference between $d$ and $W_2$, we investigate a simple example here. We choose $\mu_0$ to be a zero-mean Gaussian distribution with unit variance. The terminal distribution $\mu_1$ is set to be the average of two unit variance Gaussian distributions, one with mean $\Delta$ and the other one with mean $-\Delta$. Clearly, $d(\mu_0,\mu_1)=\Delta$. Figure \ref{fig:dvsW2} depicts $d(\mu_0,\mu_1)$ and $W_2(\mu_0,\mu_1)$ for different $\Delta$ values. As can be seen, these two distances are not equivalent and $d$ is always bounded below by $W_2$.
\begin{figure}[h]
\centering
\includegraphics[width=0.4\textwidth]{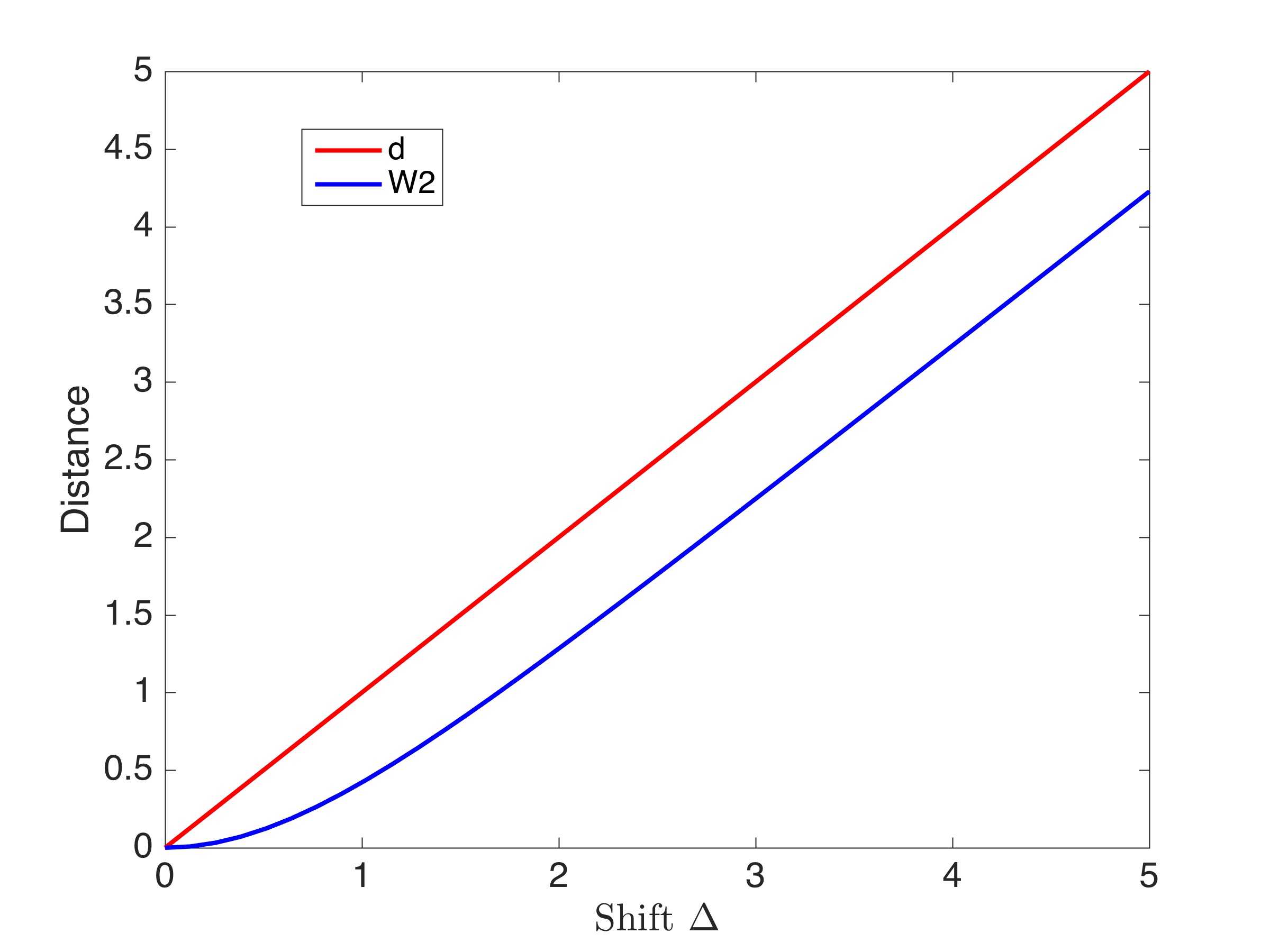}
\caption{$d$ vs $W_2$}
\label{fig:dvsW2}
\end{figure}

\subsection{Geodesic}
We compare the displacement interpolation in standard OMT theory, and our proposed geodesic interpolation (see \eqref{eq:geodesic}) on $M(\mR^n)$.
Consider the two Gaussian mixture models in Figure~\ref{fig:eg1marginals}. Both of them have two components; one in red, one in blue and the mixture model is in black. For both marginals, the masses are equally distributed among the components. The means and covariances are $m_0^1=0.5, m_0^2=0.1,\Sigma_0^1=0.01,\Sigma_0^2=0.05$ for $\mu_0$, and $m_1^1=0, m_1^2=-0.35,\Sigma_1^1=0.02,\Sigma_1^2=0.02$ for $\mu_1$. Figure~\ref{fig:eg1interp} depicts the interpolation results based on standard OMT and our method. As we can see from the figures, the intermediate densities based on standard OMT interpolation lose the Gaussian mixture structure. This is not the case for our method. The two Gaussian components of the interpolation based on proposed method described are shown in Figure~\ref{fig:eg1gaussian}.

We have similar observations for an example on 2-dimensional spaces; see Figures~\ref{fig:eg1twodmarginals}-\ref{fig:eg1twodinterp}. The two marginal distributions are Gaussian mixtures shown in Figure~\ref{fig:eg1twodmarginals}. Figure~\ref{fig:eg1twodomt} and Figure~\ref{fig:eg1twodinterp} are two interpolations, based on OMT and our method, respectively. We can see that the Gaussian mixture structure is undermined in Figure~\ref{fig:eg1twodomt}.
\begin{figure}[h]
\centering
\subfloat[$\mu_0$]{\includegraphics[width=0.40\textwidth]{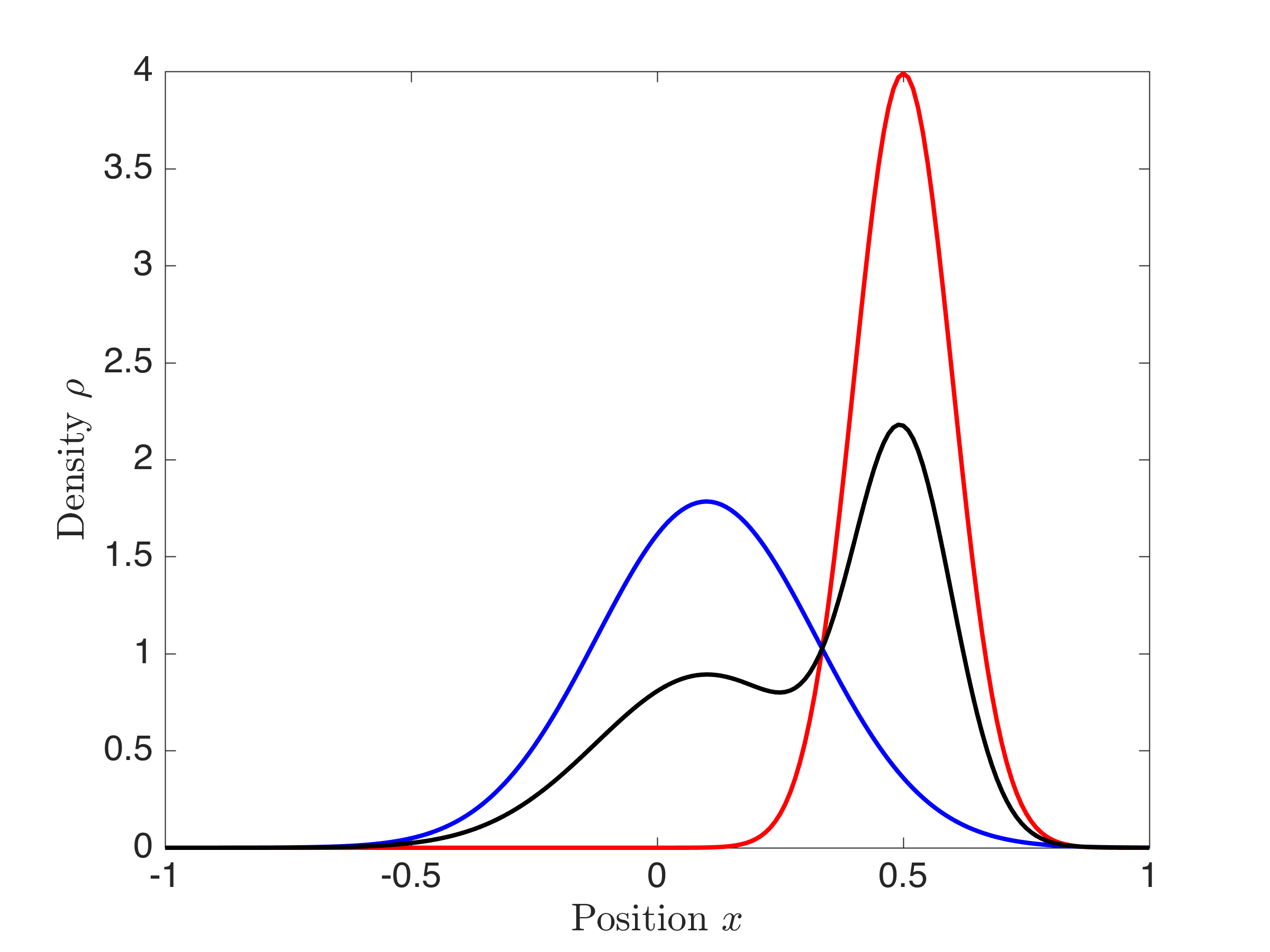}}
\subfloat[$\mu_1$]{\includegraphics[width=0.40\textwidth]{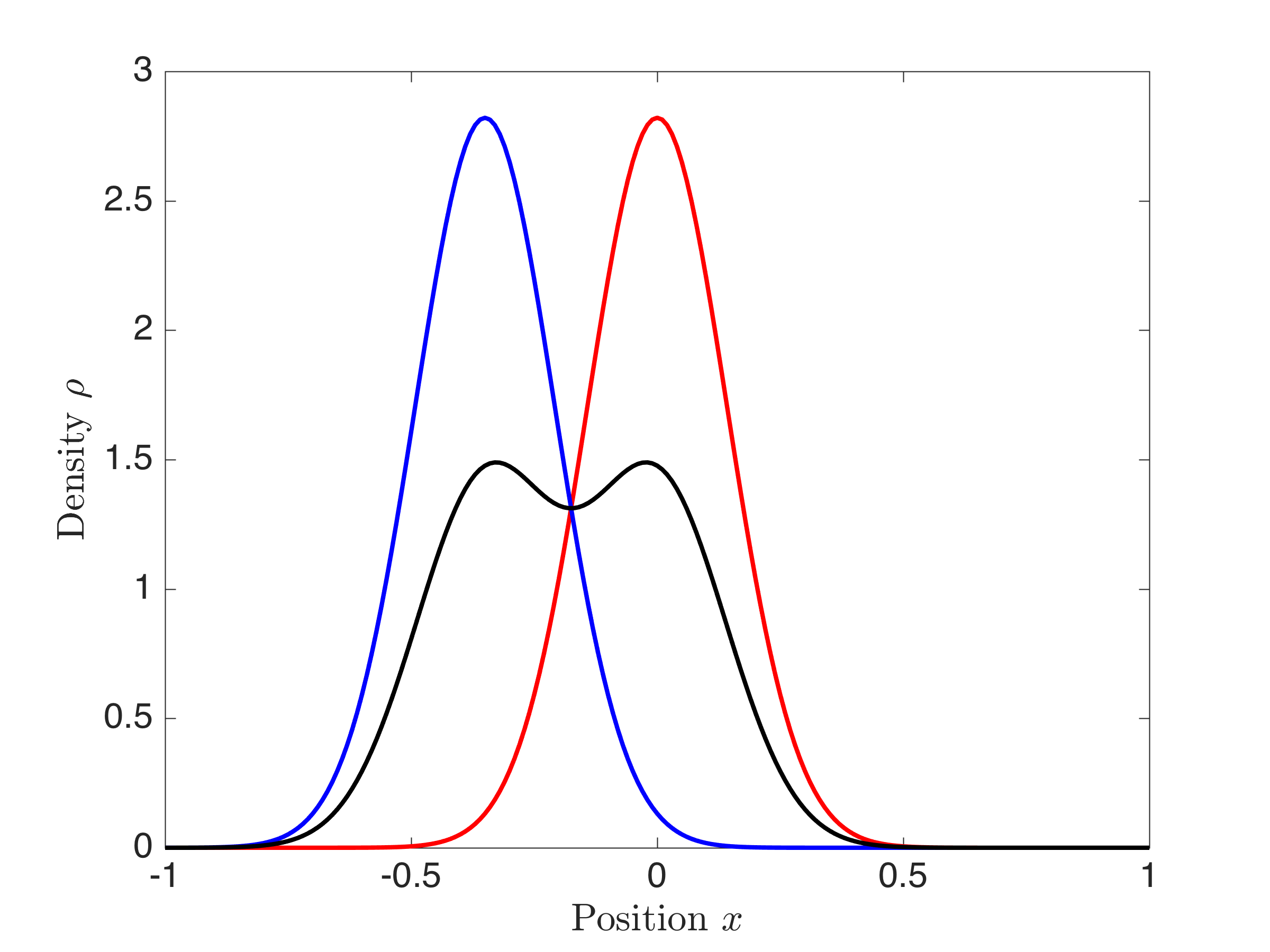}}
 \caption{Marginal distributions}
 \label{fig:eg1marginals}
\end{figure}
\begin{figure}[h]
\centering
\subfloat[OMT]{\includegraphics[width=0.40\textwidth]{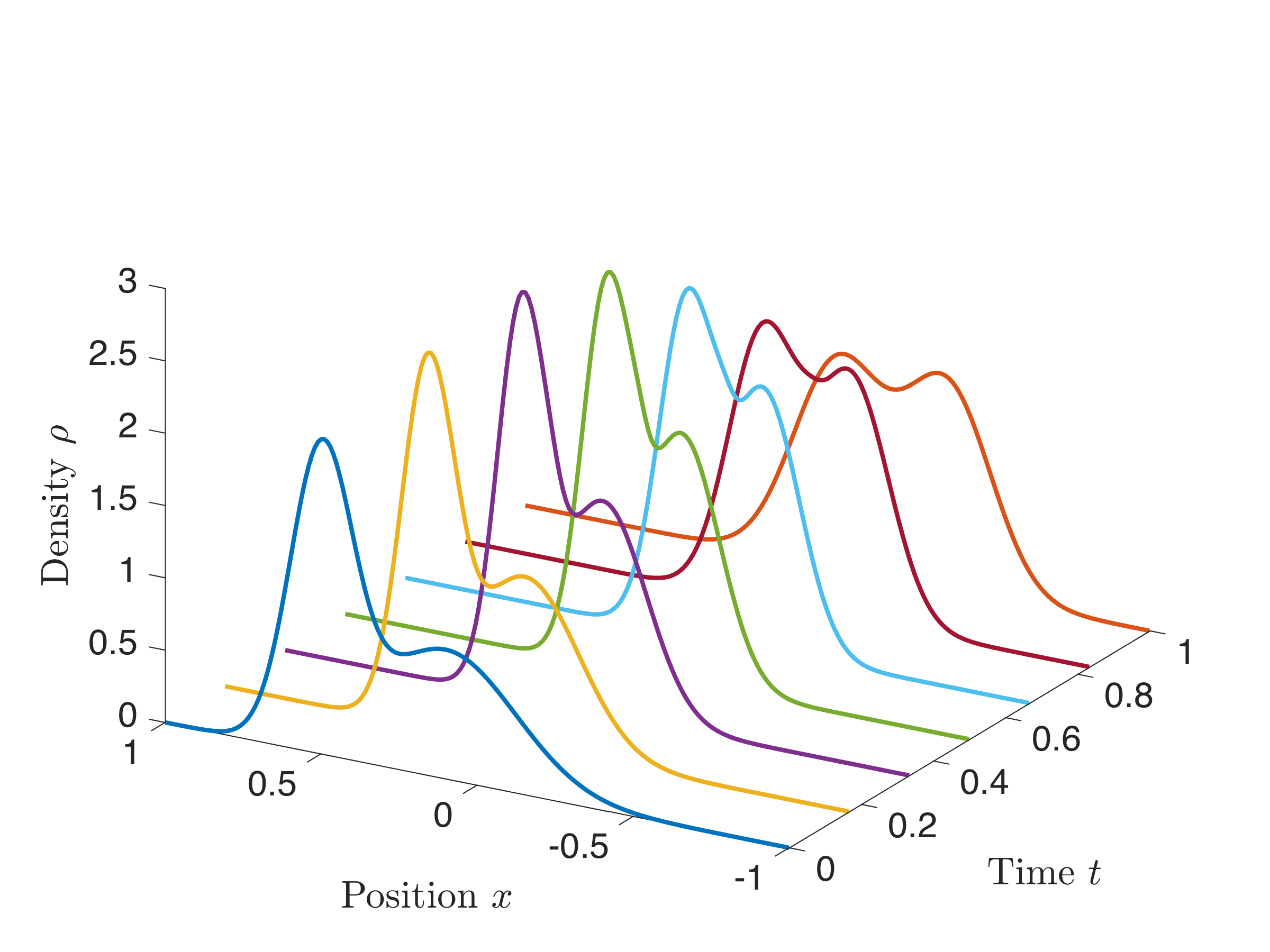}}
\subfloat[our framework]{\includegraphics[width=0.40\textwidth]{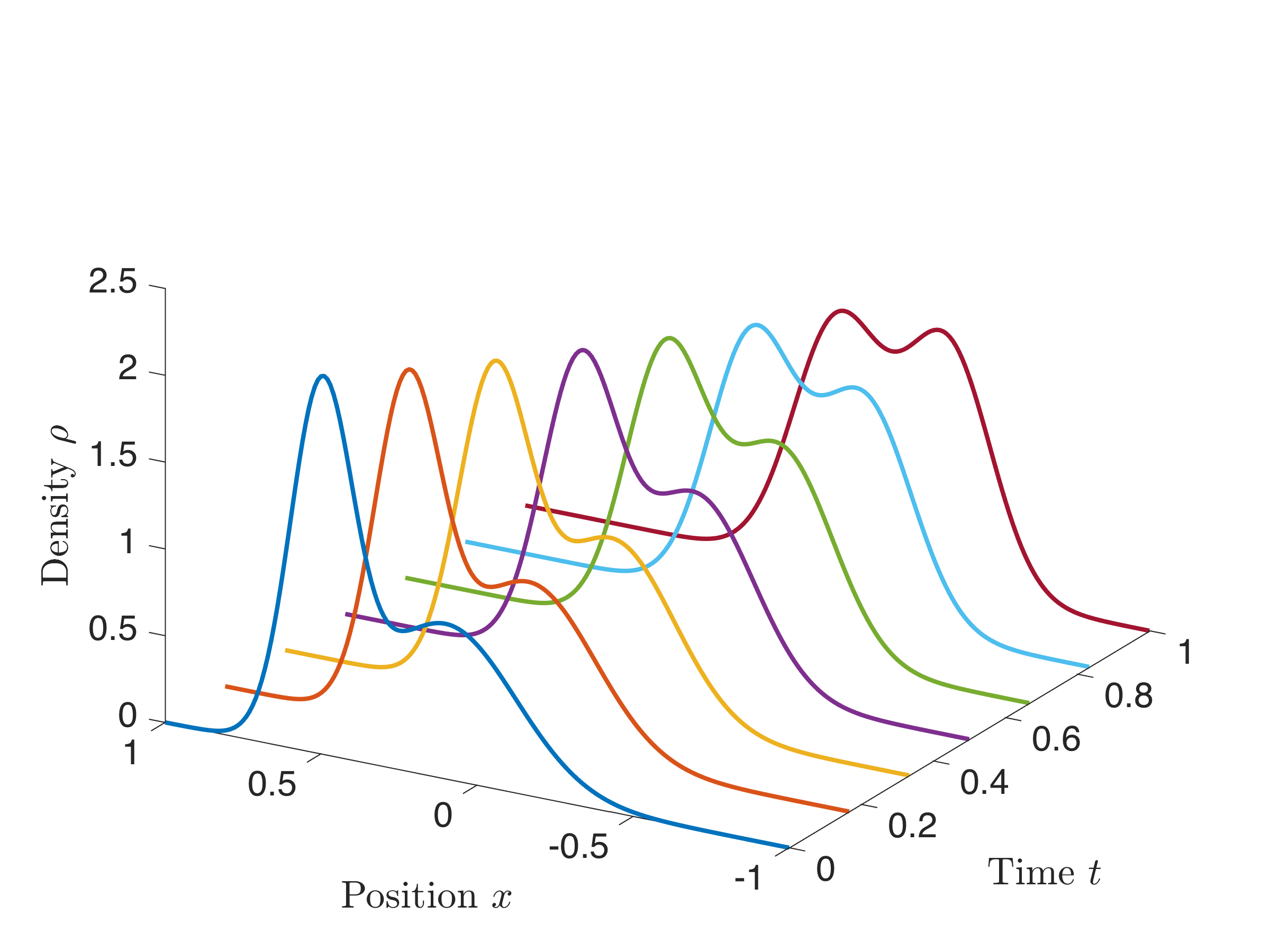}}
 \caption{Interpolations}
 \label{fig:eg1interp}
\end{figure}
\begin{figure}[h]
\centering
\includegraphics[width=0.4\textwidth]{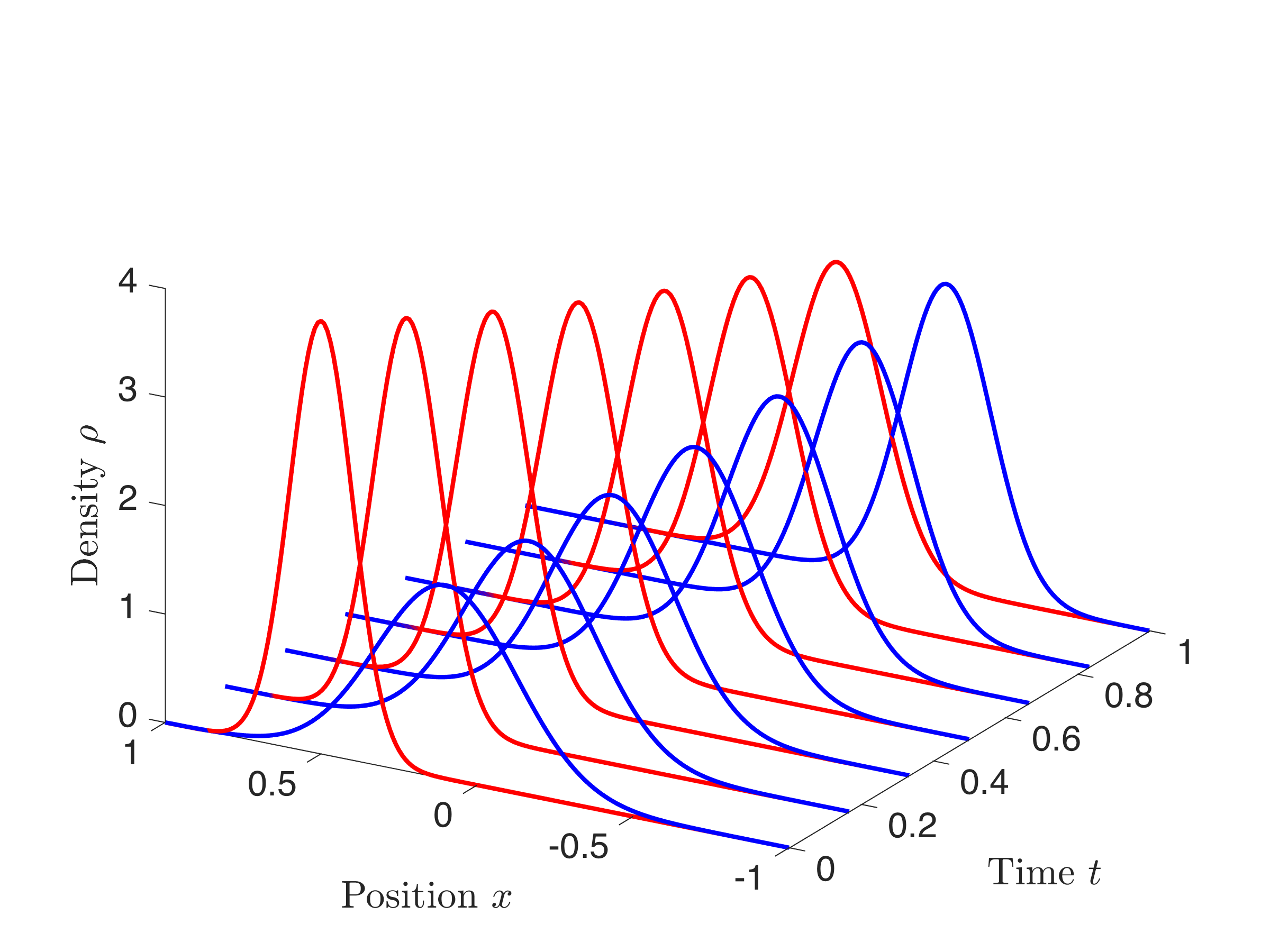}
\caption{Two Gaussian components of the interpolation}
\label{fig:eg1gaussian}
\end{figure}
\begin{figure}[h]
\centering
\subfloat[$\mu_0$]{\includegraphics[width=0.40\textwidth]{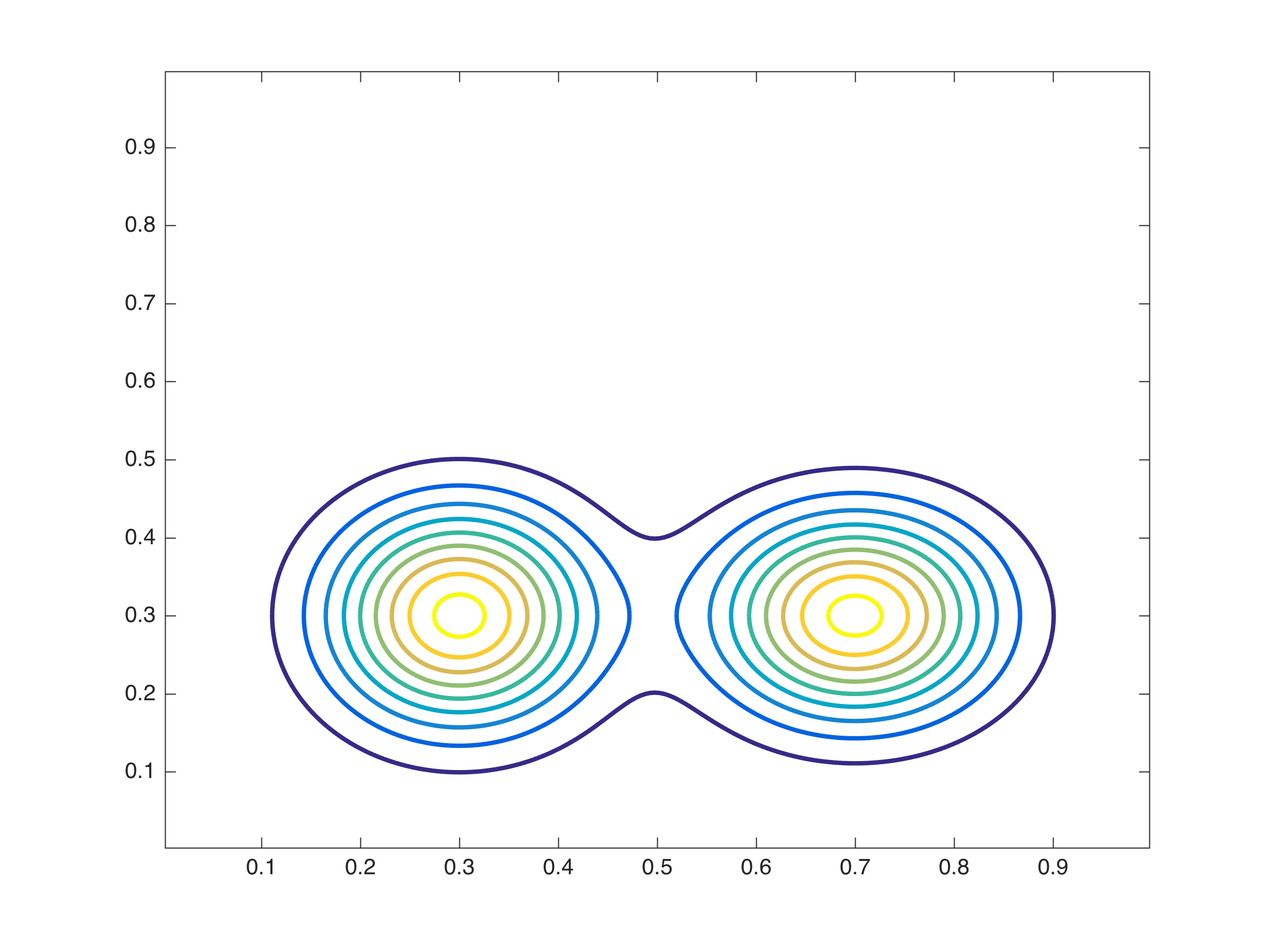}}
\subfloat[$\mu_1$]{\includegraphics[width=0.40\textwidth]{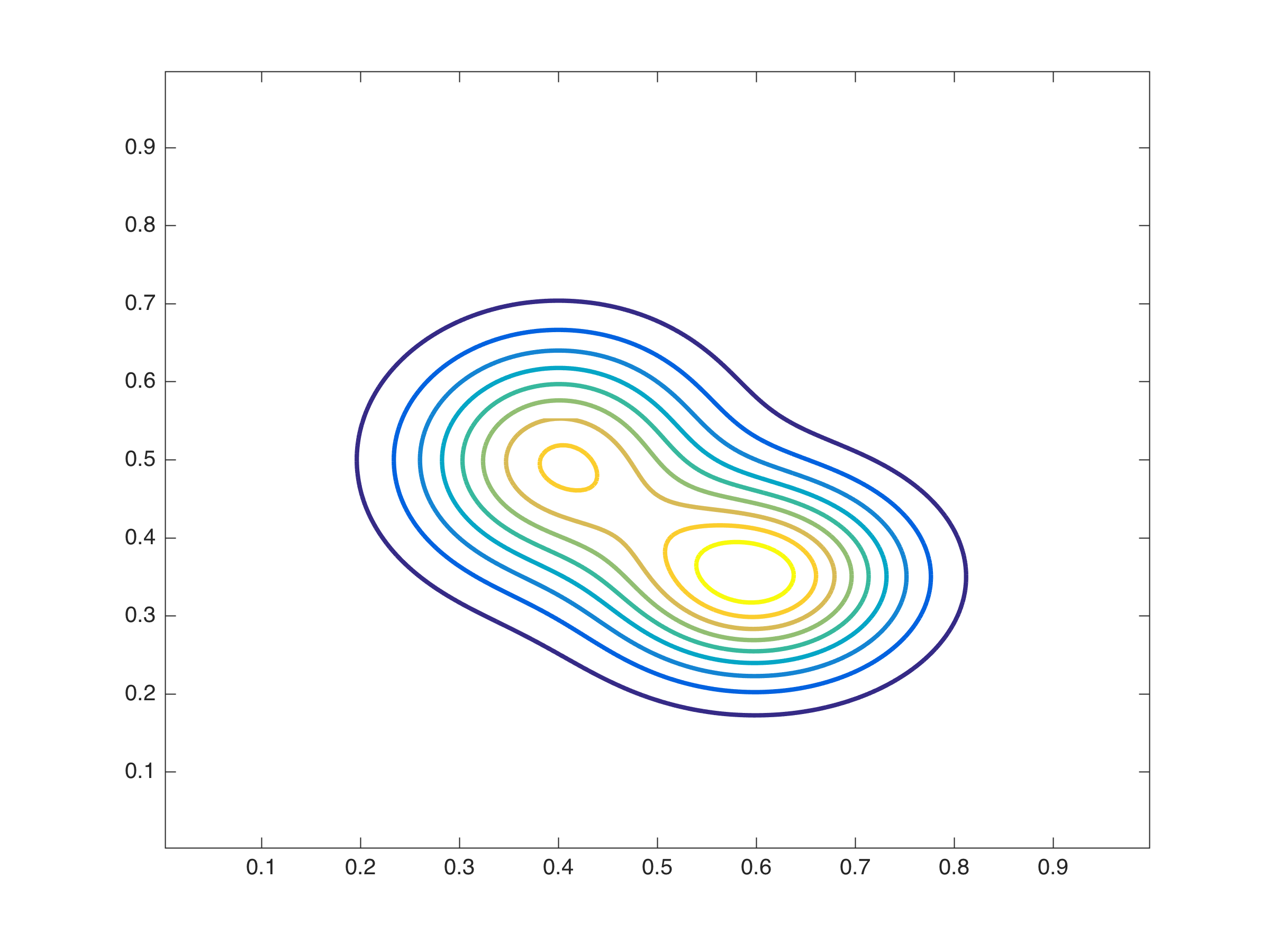}}
 \caption{Marginal distributions}
 \label{fig:eg1twodmarginals}
\end{figure}
\begin{figure}[h]
\centering
\subfloat{\includegraphics[width=0.20\textwidth]{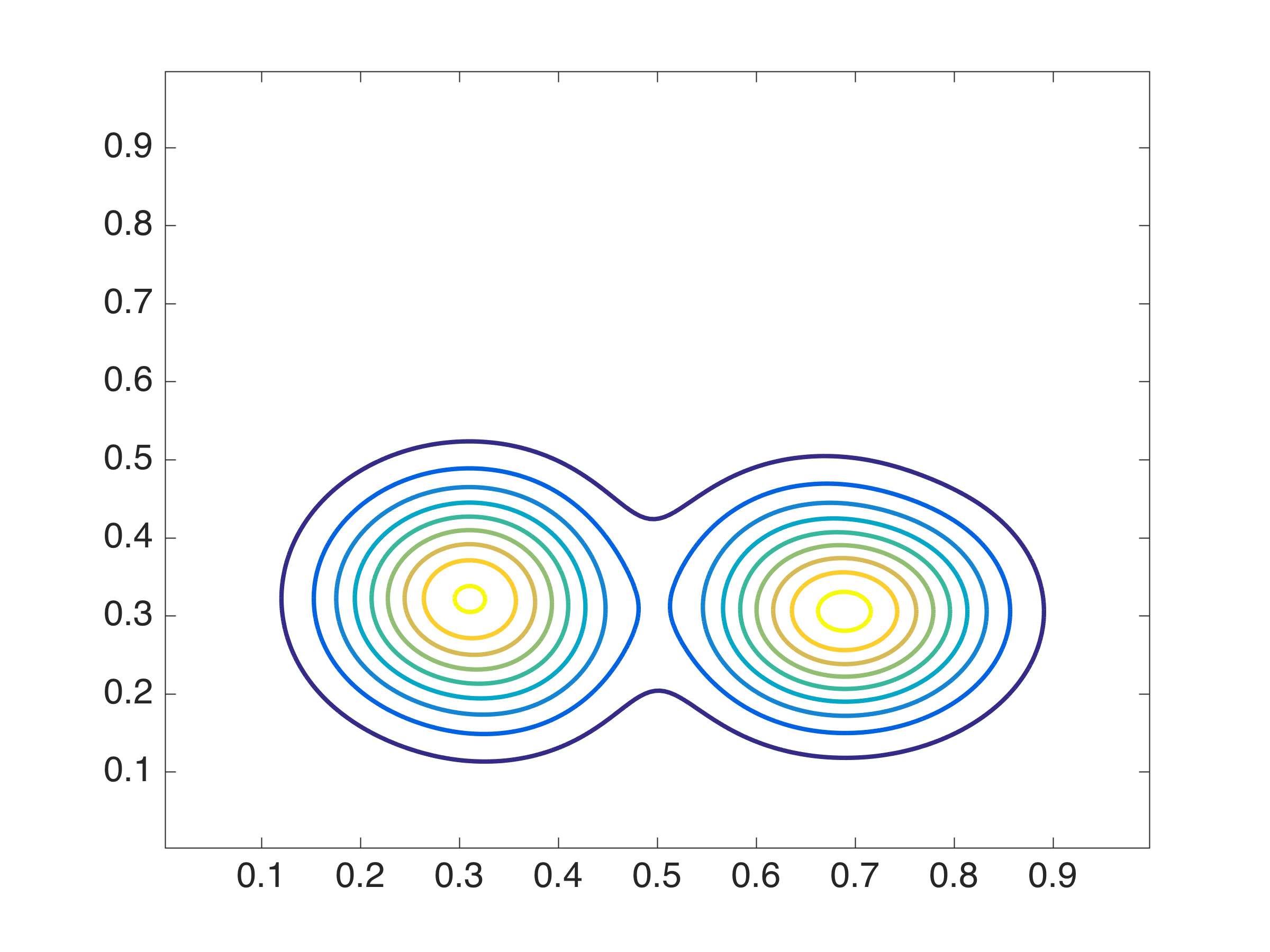}}
\subfloat{\includegraphics[width=0.20\textwidth]{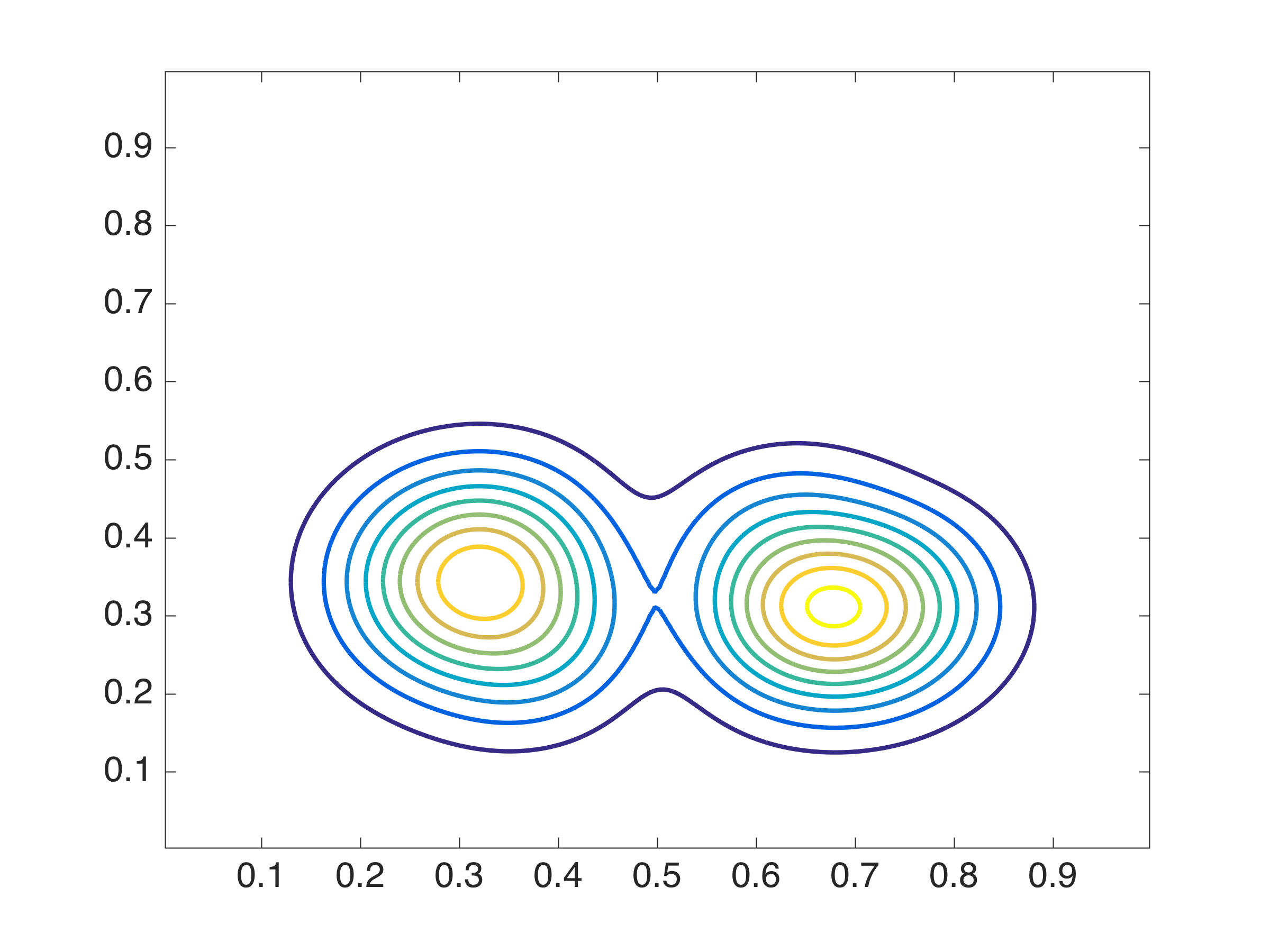}}
\subfloat{\includegraphics[width=0.20\textwidth]{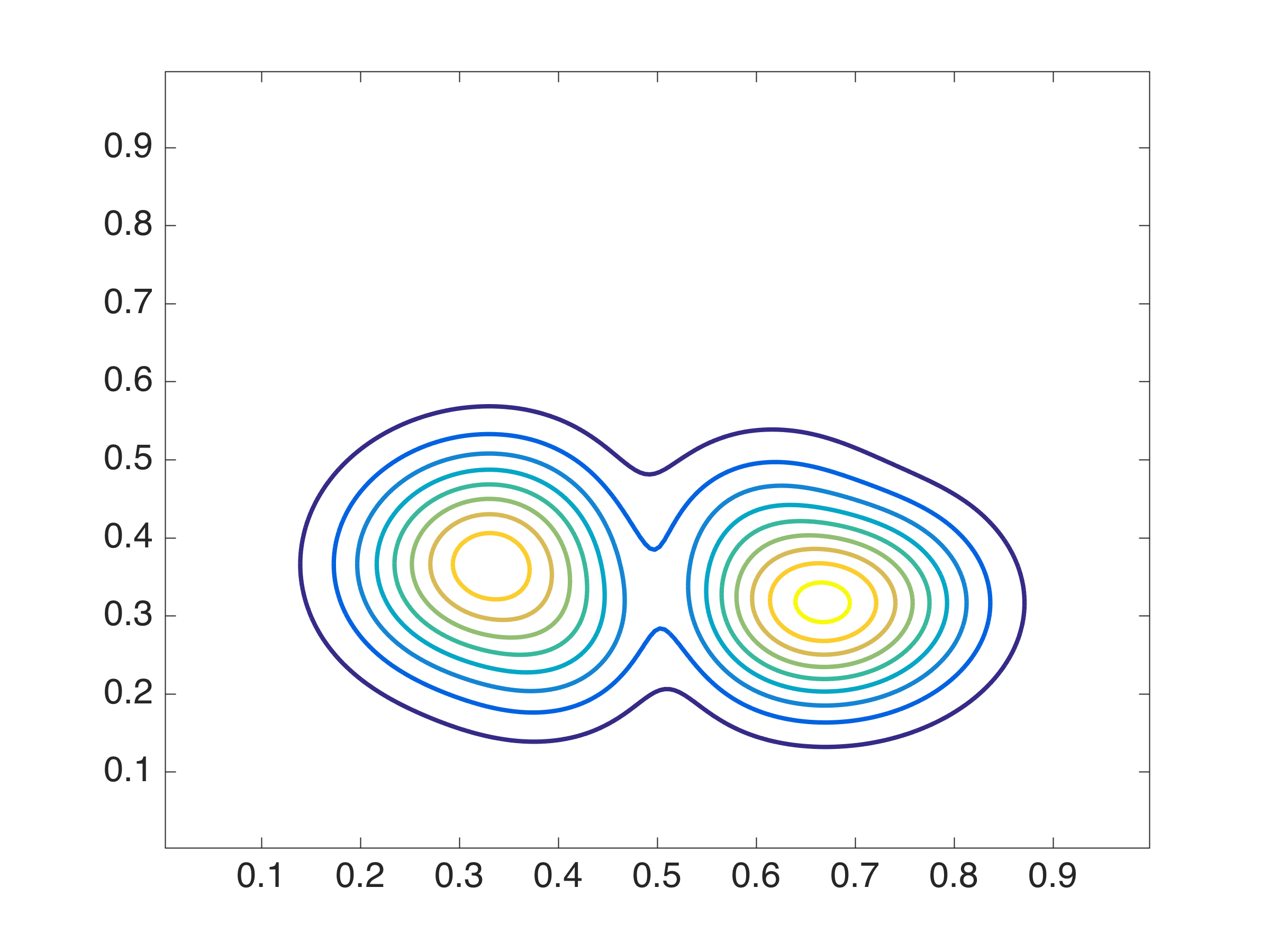}}
\subfloat{\includegraphics[width=0.20\textwidth]{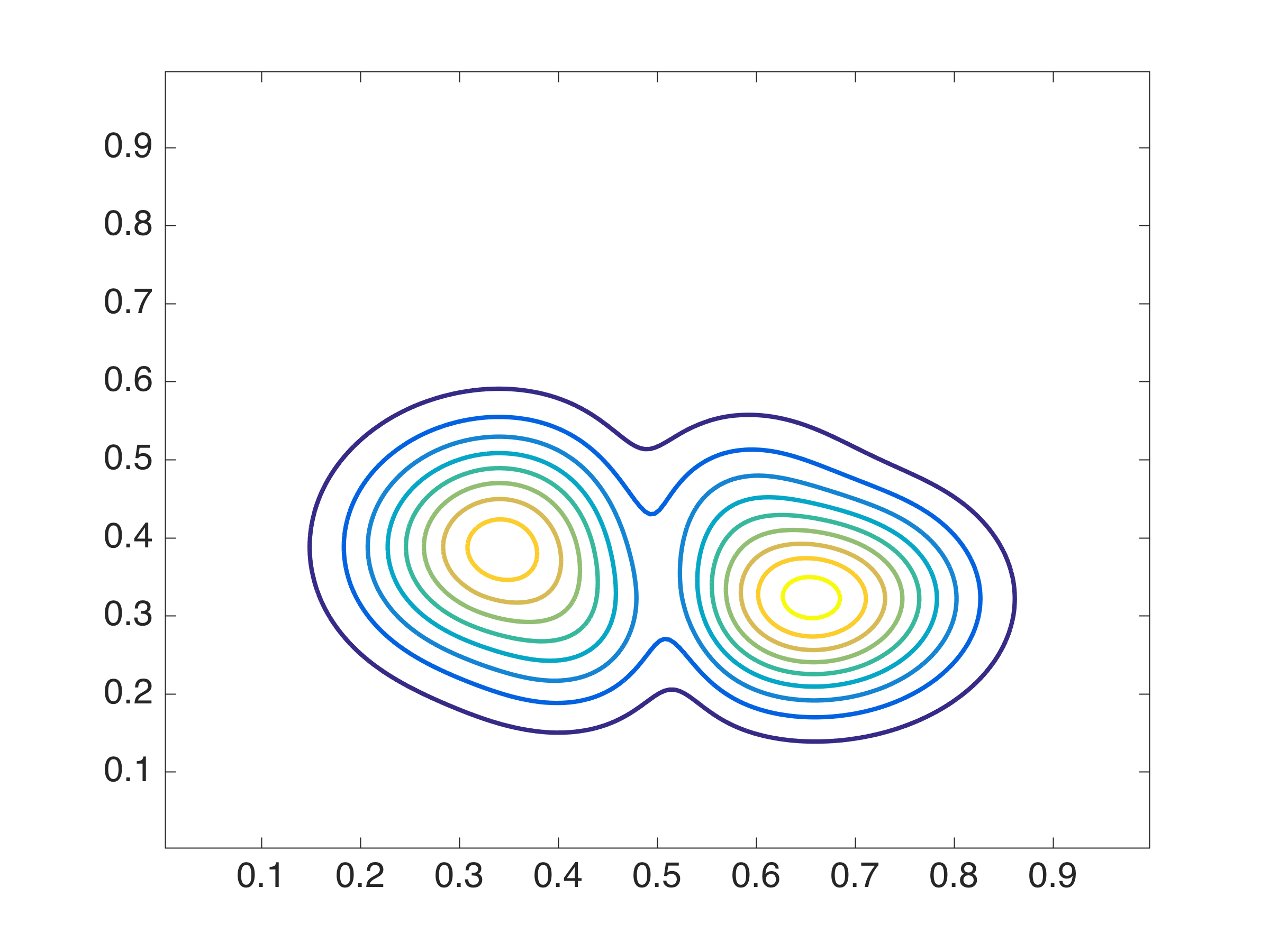}}
\\
\subfloat{\includegraphics[width=0.20\textwidth]{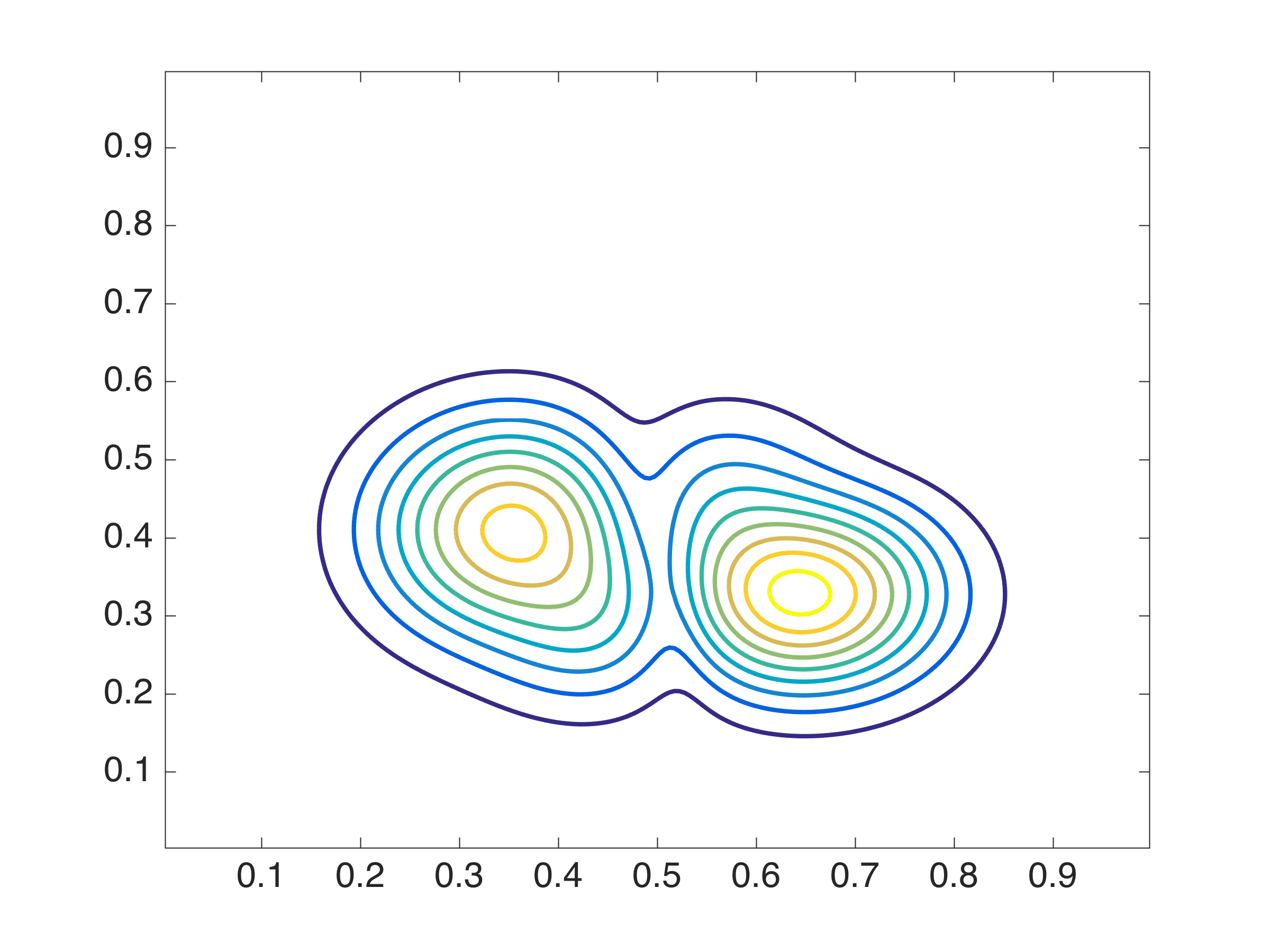}}
\subfloat{\includegraphics[width=0.20\textwidth]{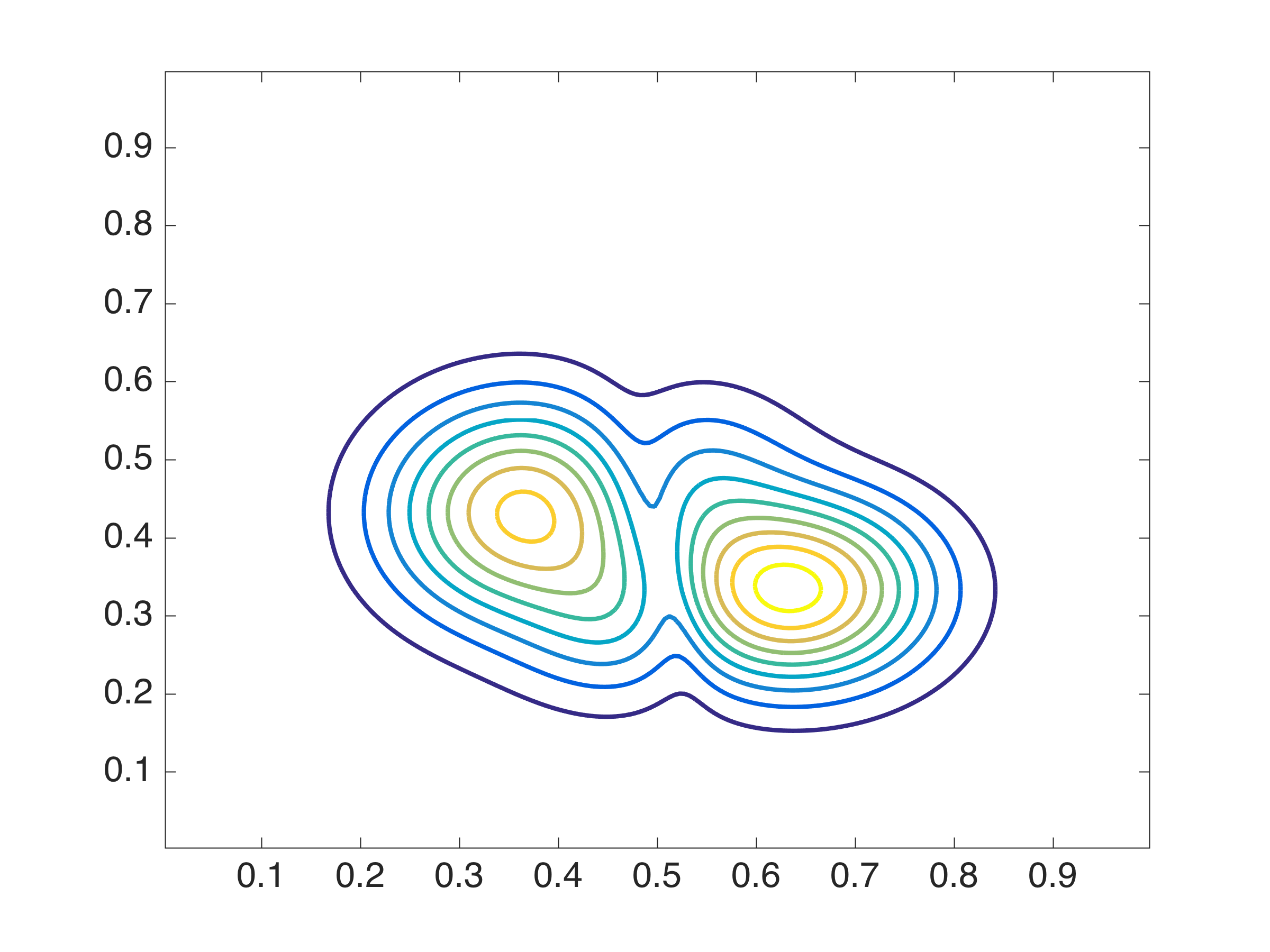}}
\subfloat{\includegraphics[width=0.20\textwidth]{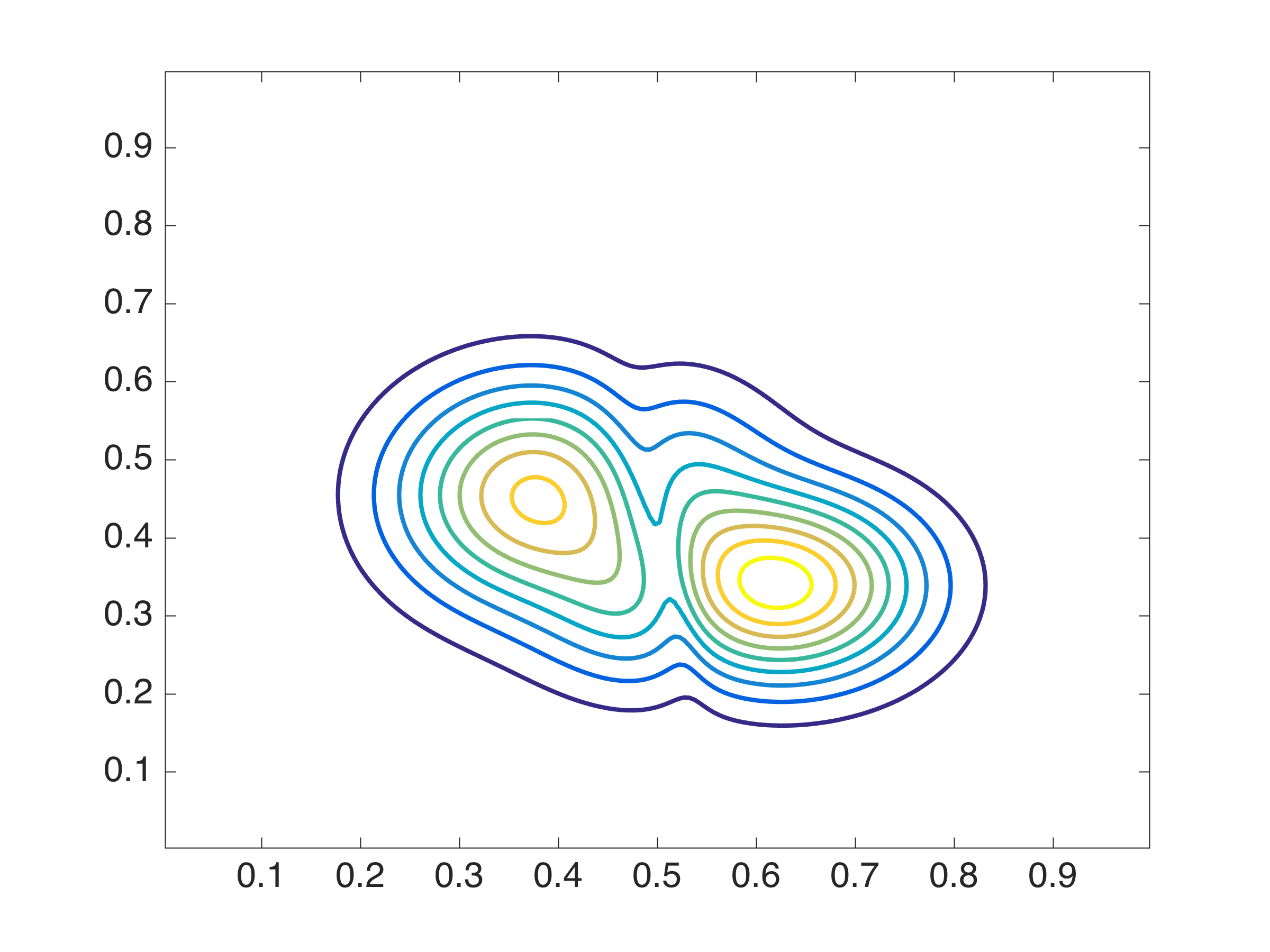}}
\subfloat{\includegraphics[width=0.20\textwidth]{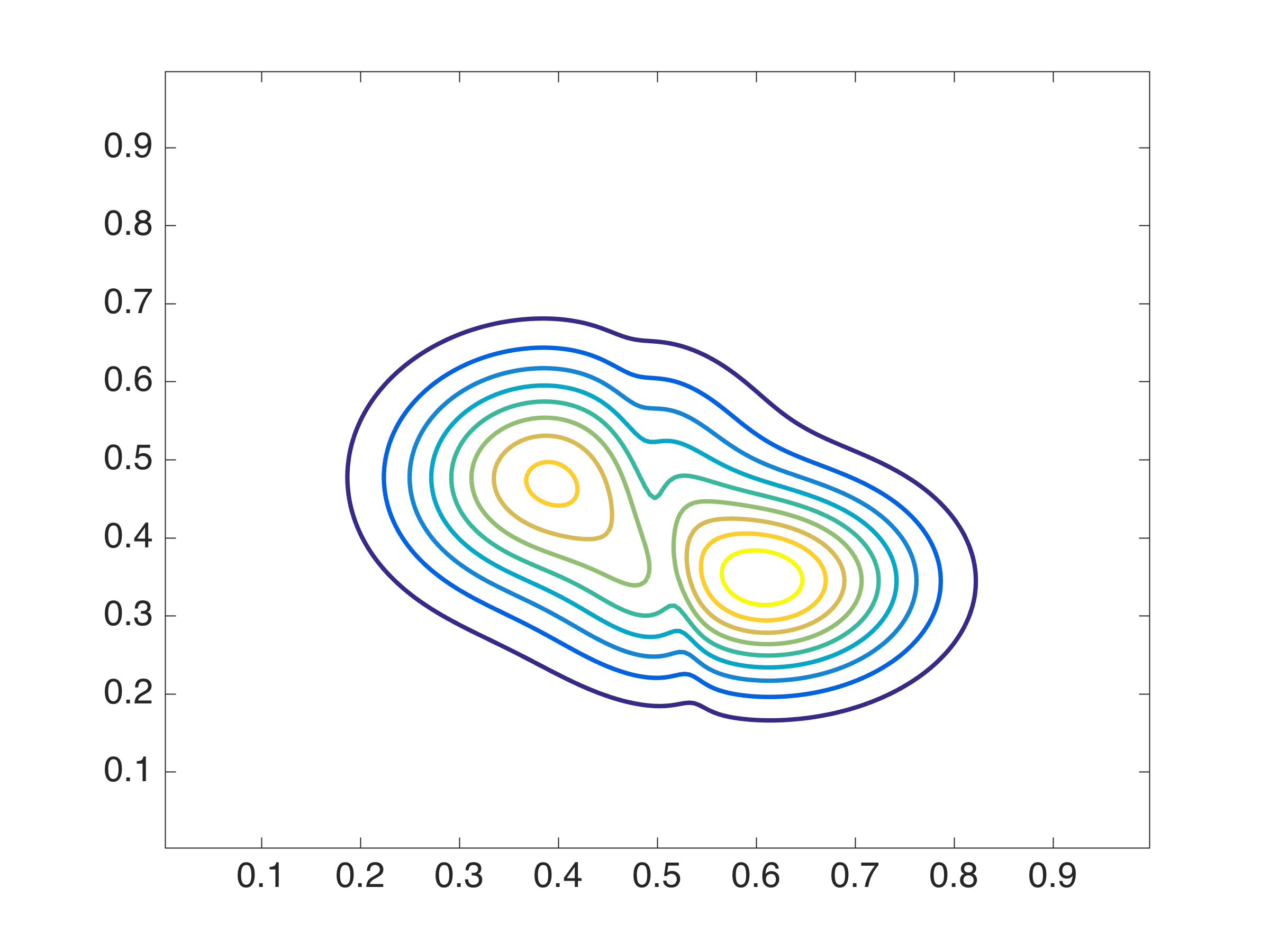}}
 \caption{OMT Interpolation}
 \label{fig:eg1twodomt}
\end{figure}
\begin{figure}[h]
\centering
\subfloat{\includegraphics[width=0.20\textwidth]{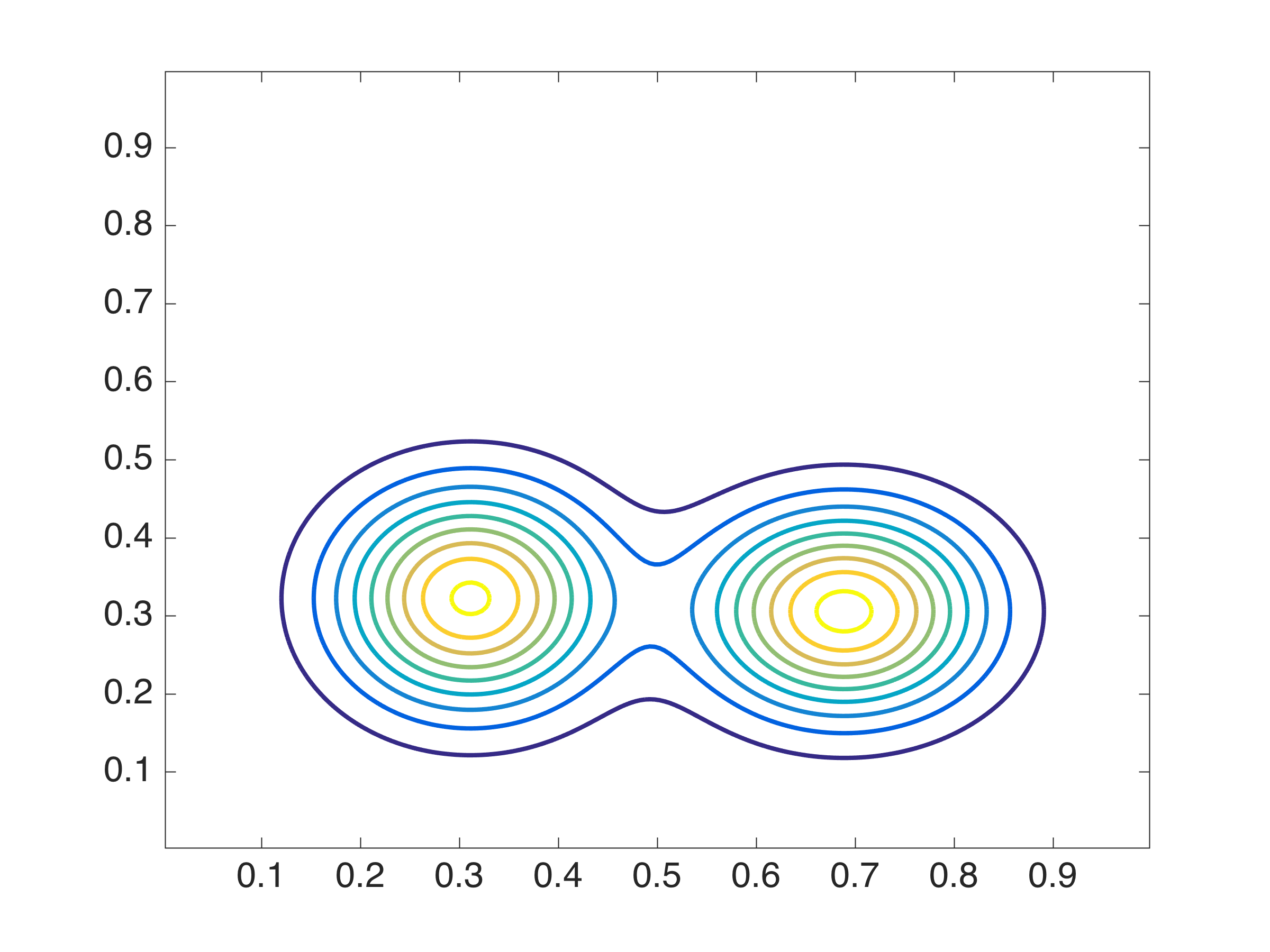}}
\subfloat{\includegraphics[width=0.20\textwidth]{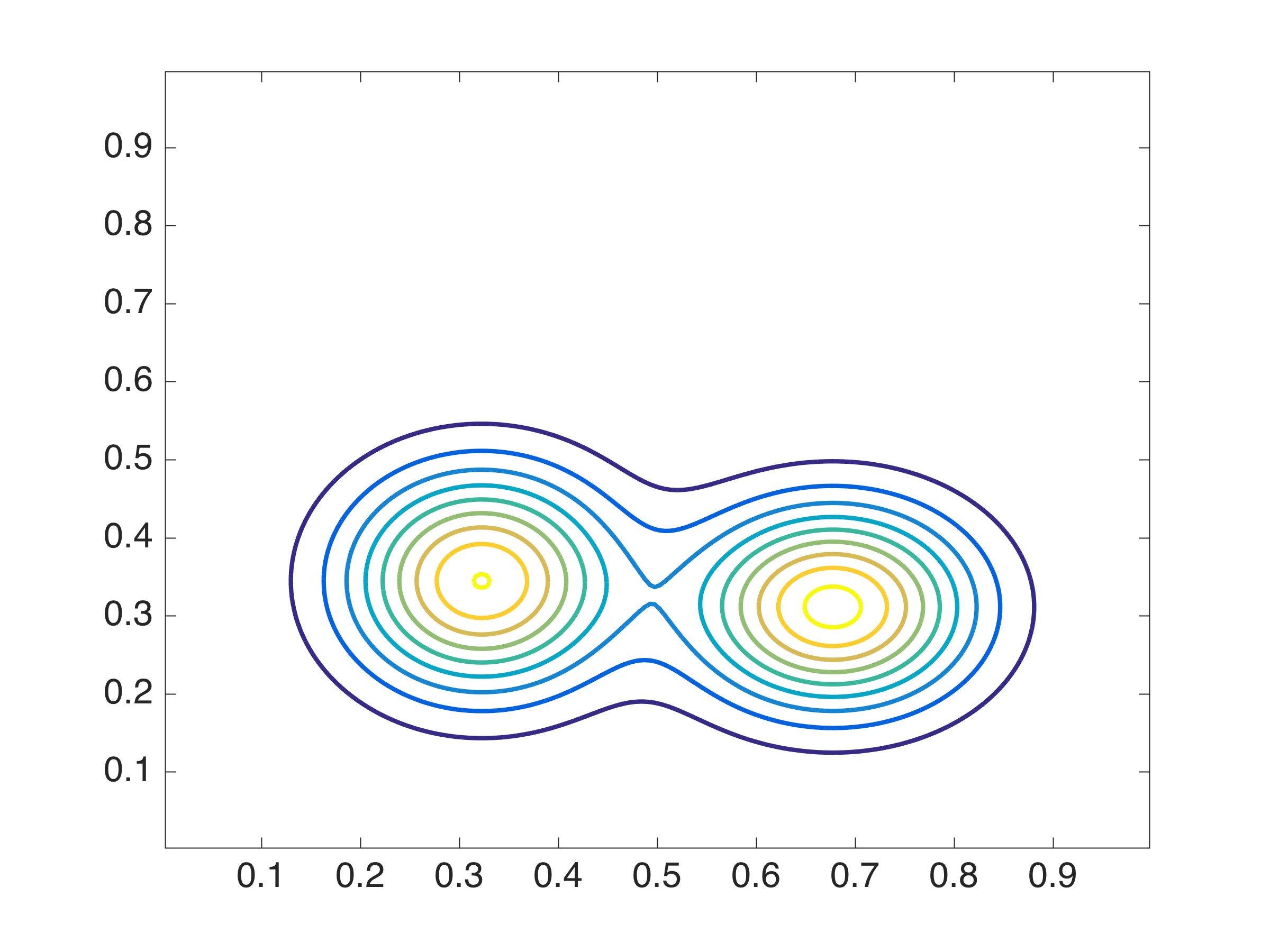}}
\subfloat{\includegraphics[width=0.20\textwidth]{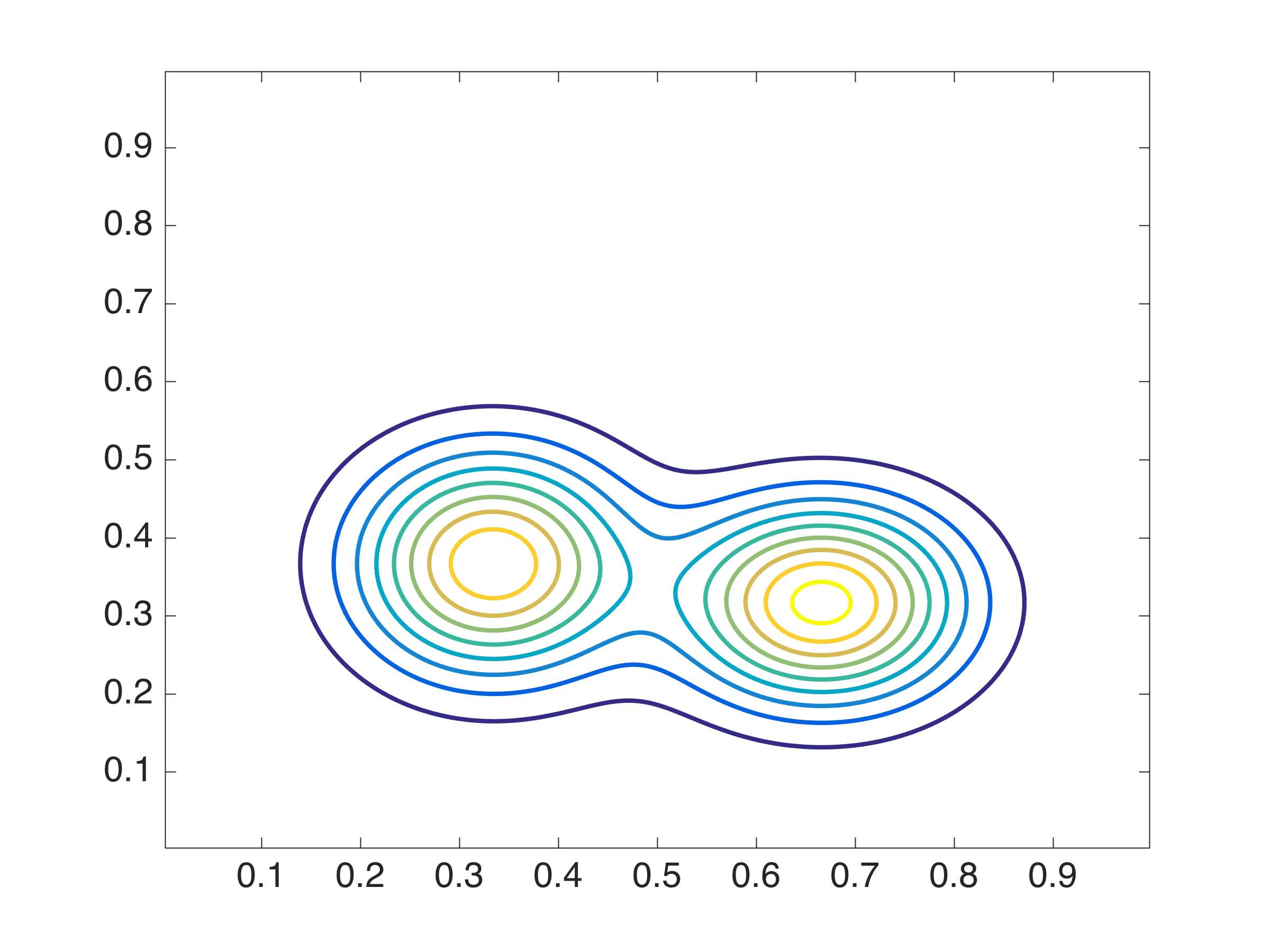}}
\subfloat{\includegraphics[width=0.20\textwidth]{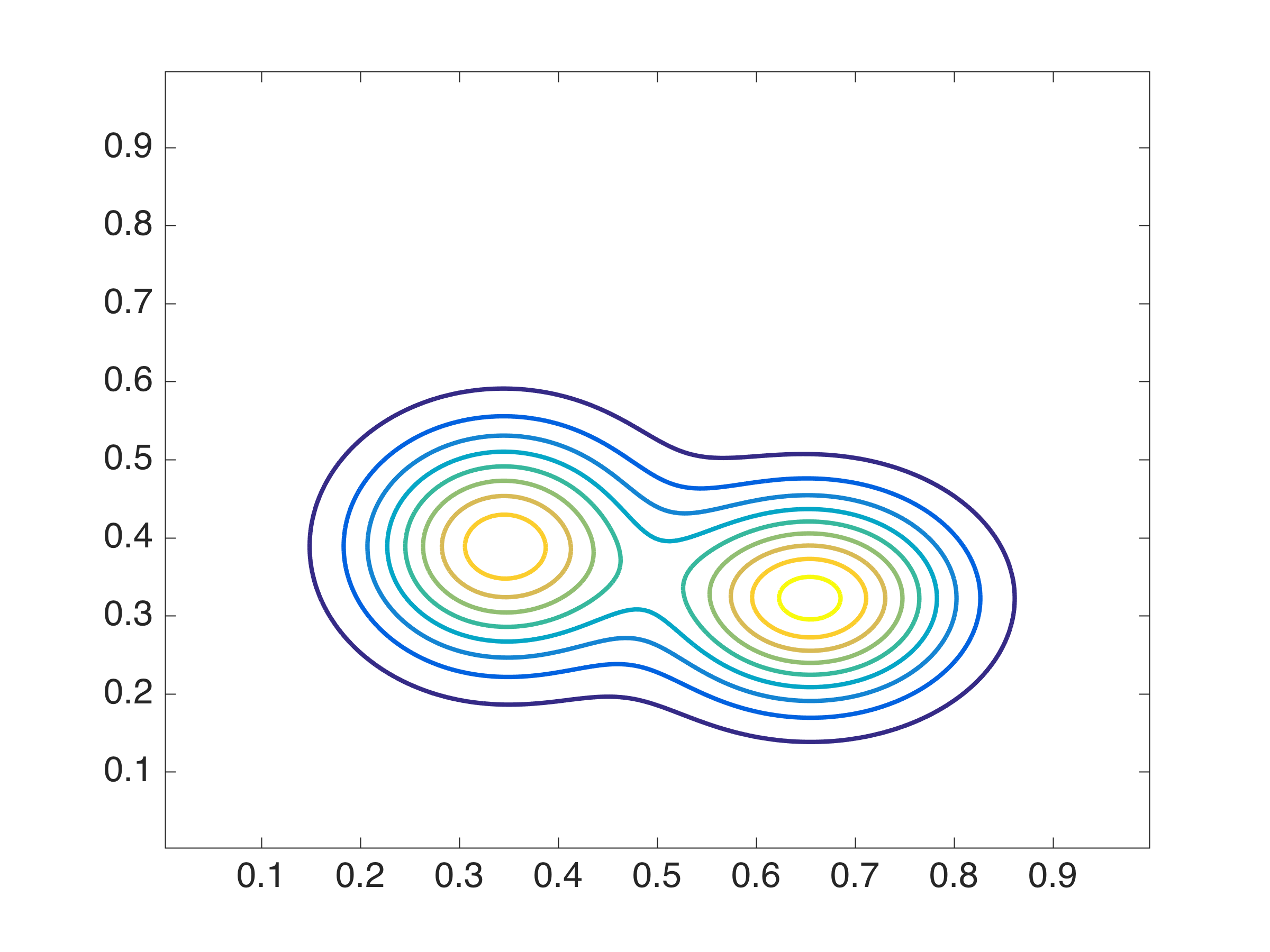}}
\\
\subfloat{\includegraphics[width=0.20\textwidth]{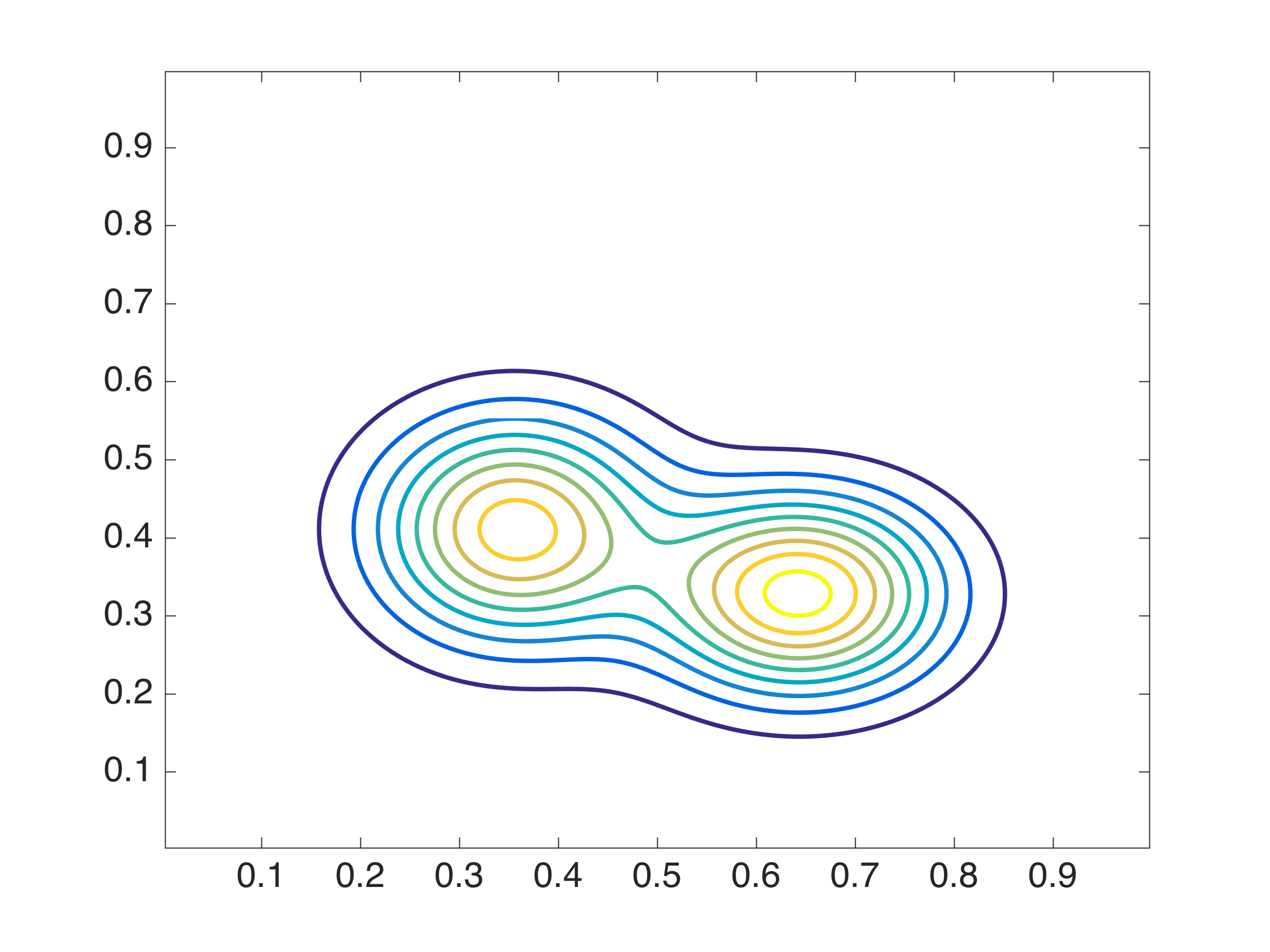}}
\subfloat{\includegraphics[width=0.20\textwidth]{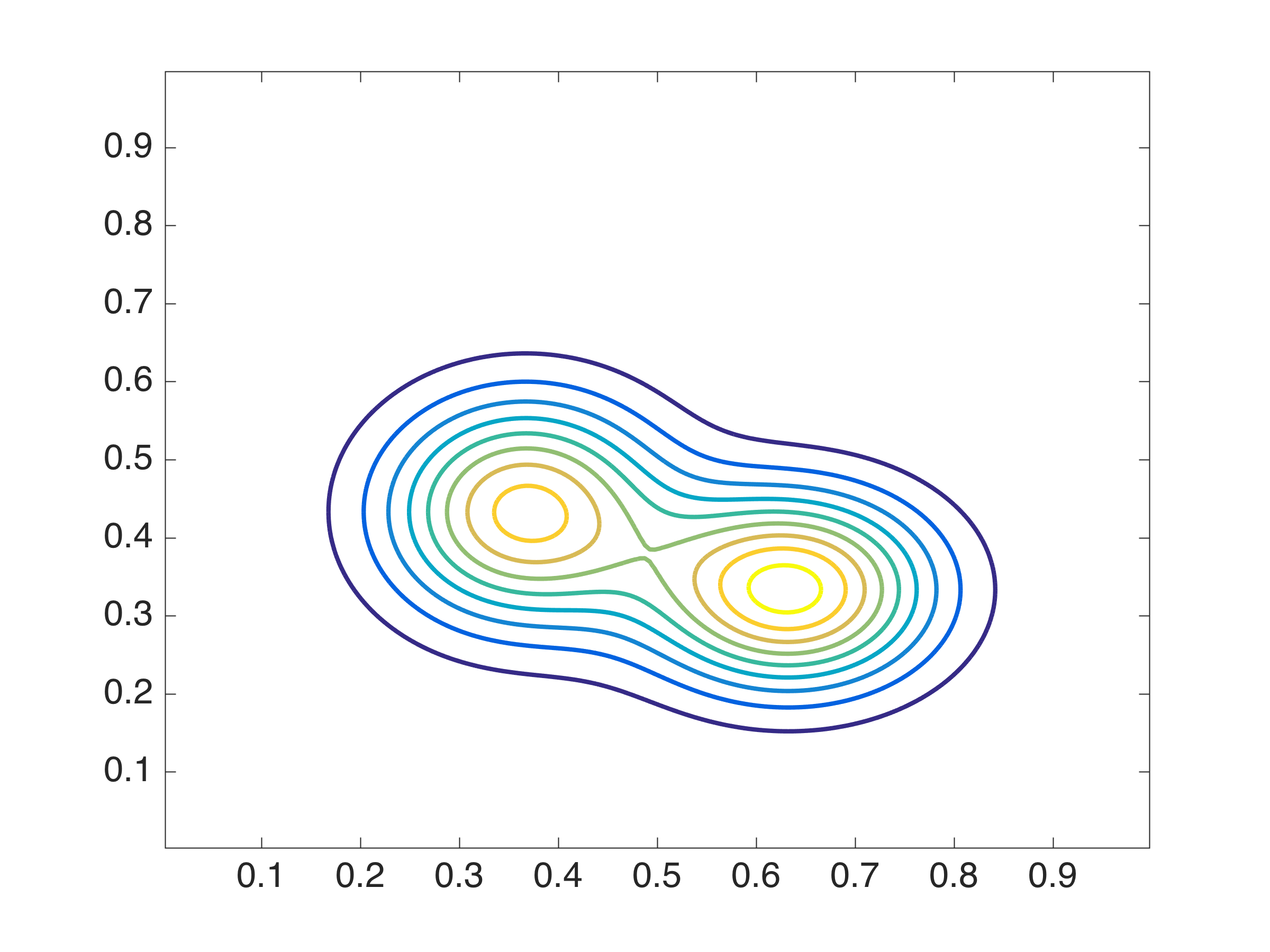}}
\subfloat{\includegraphics[width=0.20\textwidth]{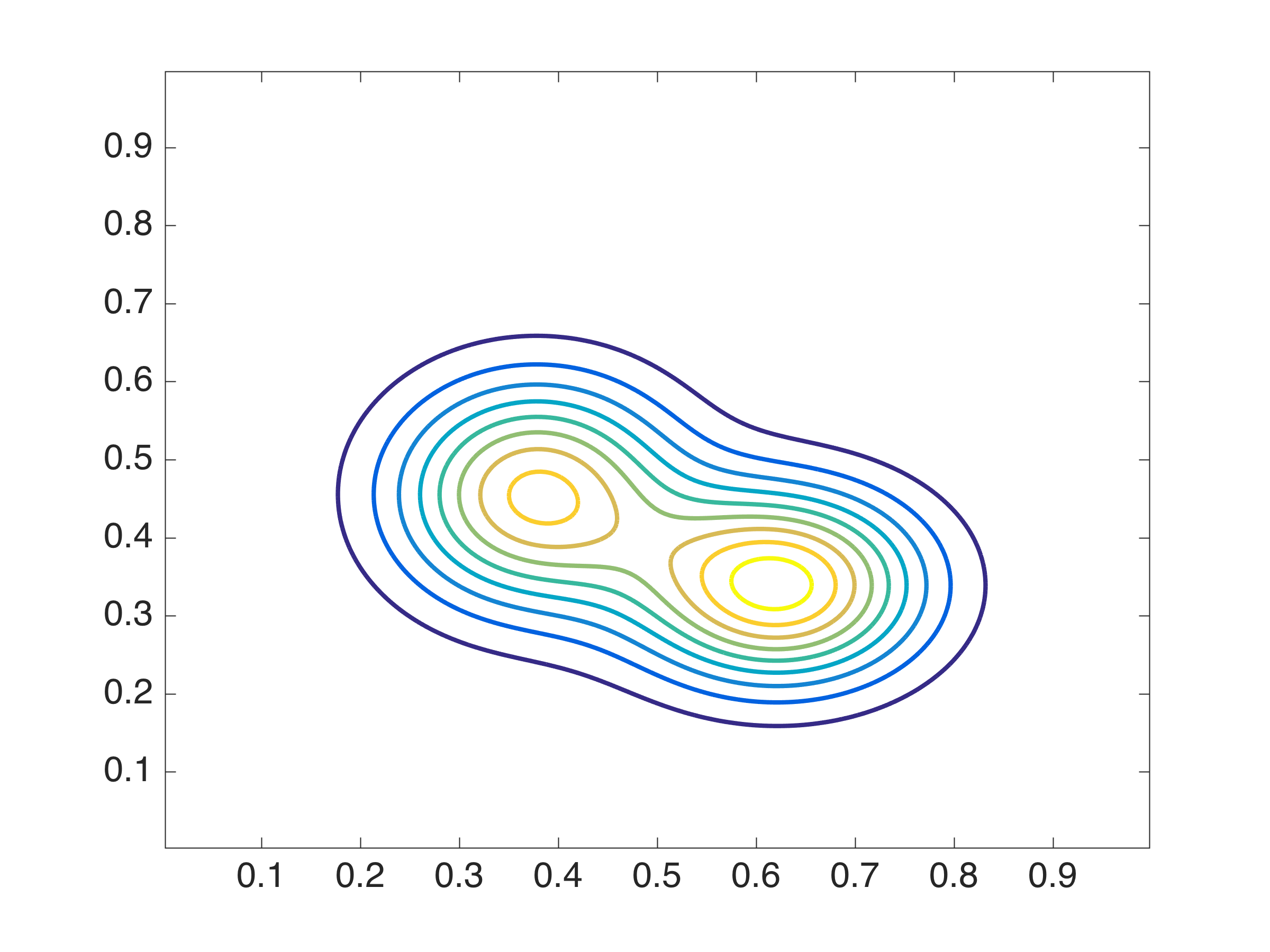}}
\subfloat{\includegraphics[width=0.20\textwidth]{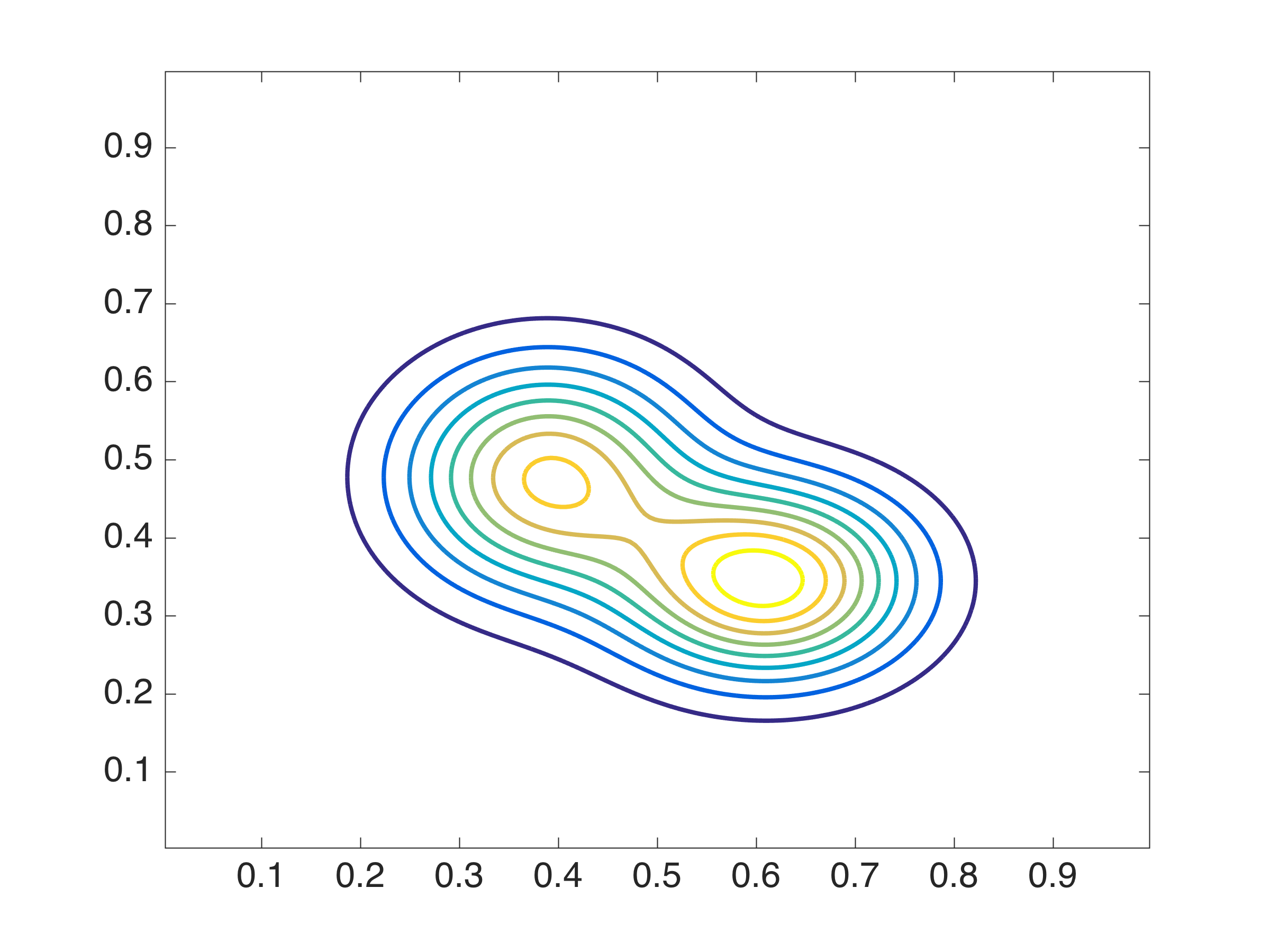}}
 \caption{Our Interpolation}
 \label{fig:eg1twodinterp}
\end{figure}
\subsection{Barycenter}
As shown in Figure \ref{fig:eg3marginals}, three Gaussian mixture distributions are given. The masses are equally distributed among the components. The statistics for $\mu_1, \mu_2, \mu_3$ are $(m_1^1=0.5, m_1^2=0.1,\Sigma_1^1=0.01,\Sigma_1^2=0.05)$, $(m_2^1=0, m_2^2=-0.35,\Sigma_2^1=0.02,\Sigma_2^2=0.02)$ and $(m_3^1=0.4, m_3^2=-0.45,\Sigma_3^1=0.025,\Sigma_3^2=0.021)$ respectively. We apply both our method and the traditional OMT theory to compute the barycenter. Two sets of weights are considered and the results are displayed in Figure \ref{fig:eg3barycenter} and \ref{fig:eg3barycenter1}. Apparently, our method gives better average. The Gaussian mixtures structure is damaged if traditional OMT is used.
\begin{figure}[h]
\centering
\subfloat[$\mu_1$]{\includegraphics[width=0.30\textwidth]{eg1rho0.png}}
\subfloat[$\mu_2$]{\includegraphics[width=0.30\textwidth]{eg1rho1.png}}
\subfloat[$\mu_3$]{\includegraphics[width=0.30\textwidth]{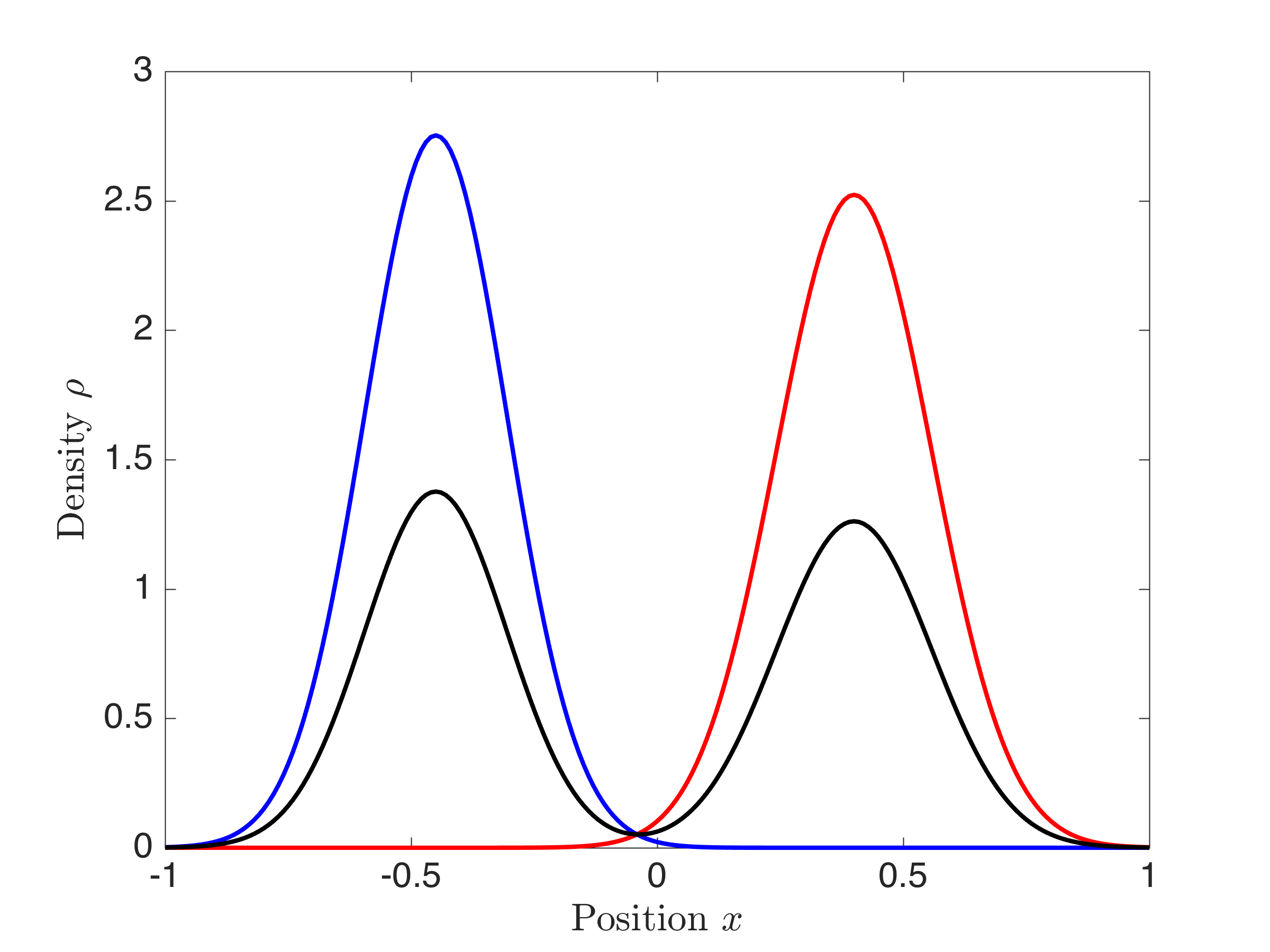}}
 \caption{Marginal distributions}
 \label{fig:eg3marginals}
\end{figure}
\begin{figure}[h]
\centering
\subfloat[our method]{\includegraphics[width=0.40\textwidth]{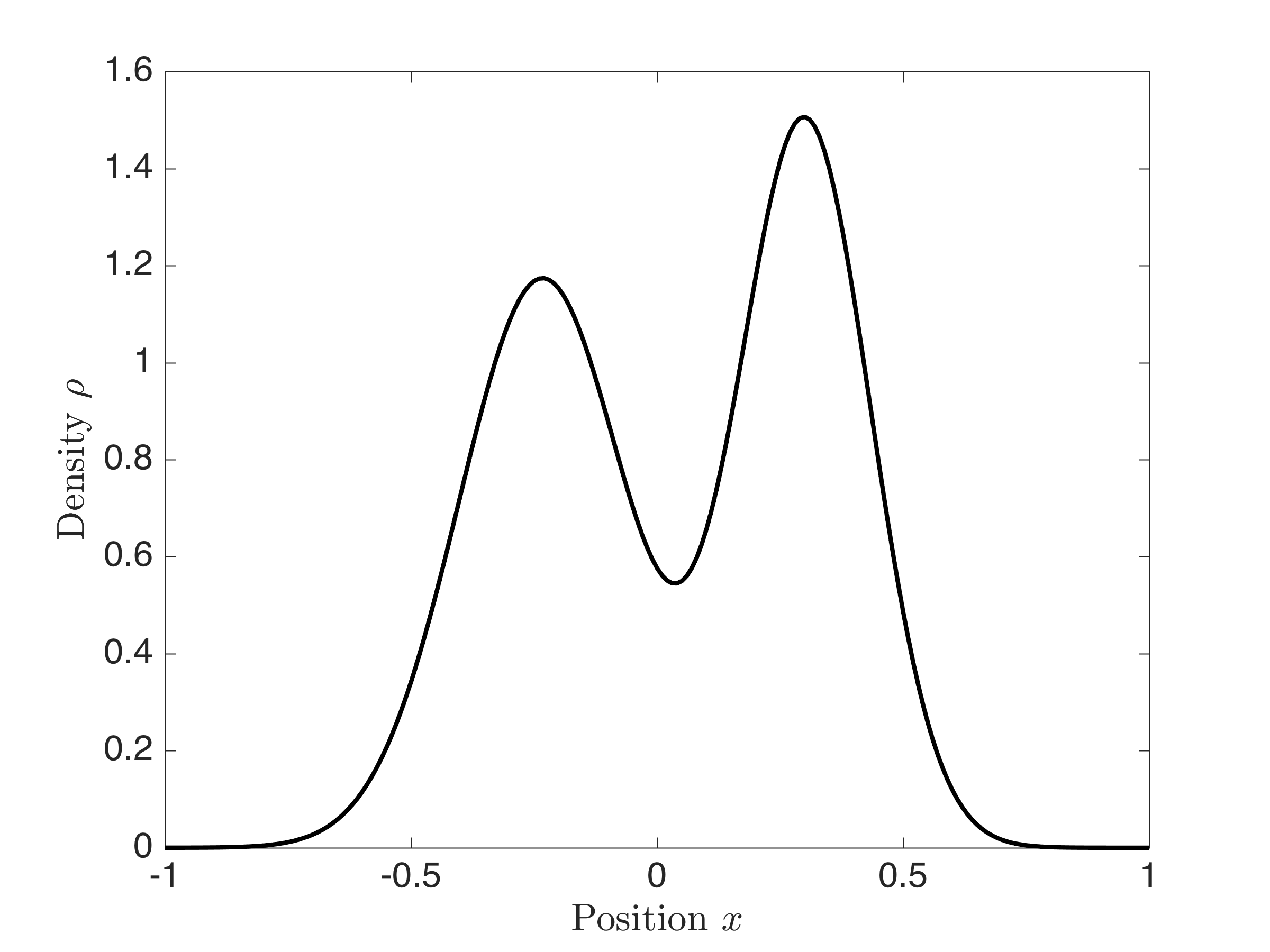}}
\subfloat[optimal transport]{\includegraphics[width=0.40\textwidth]{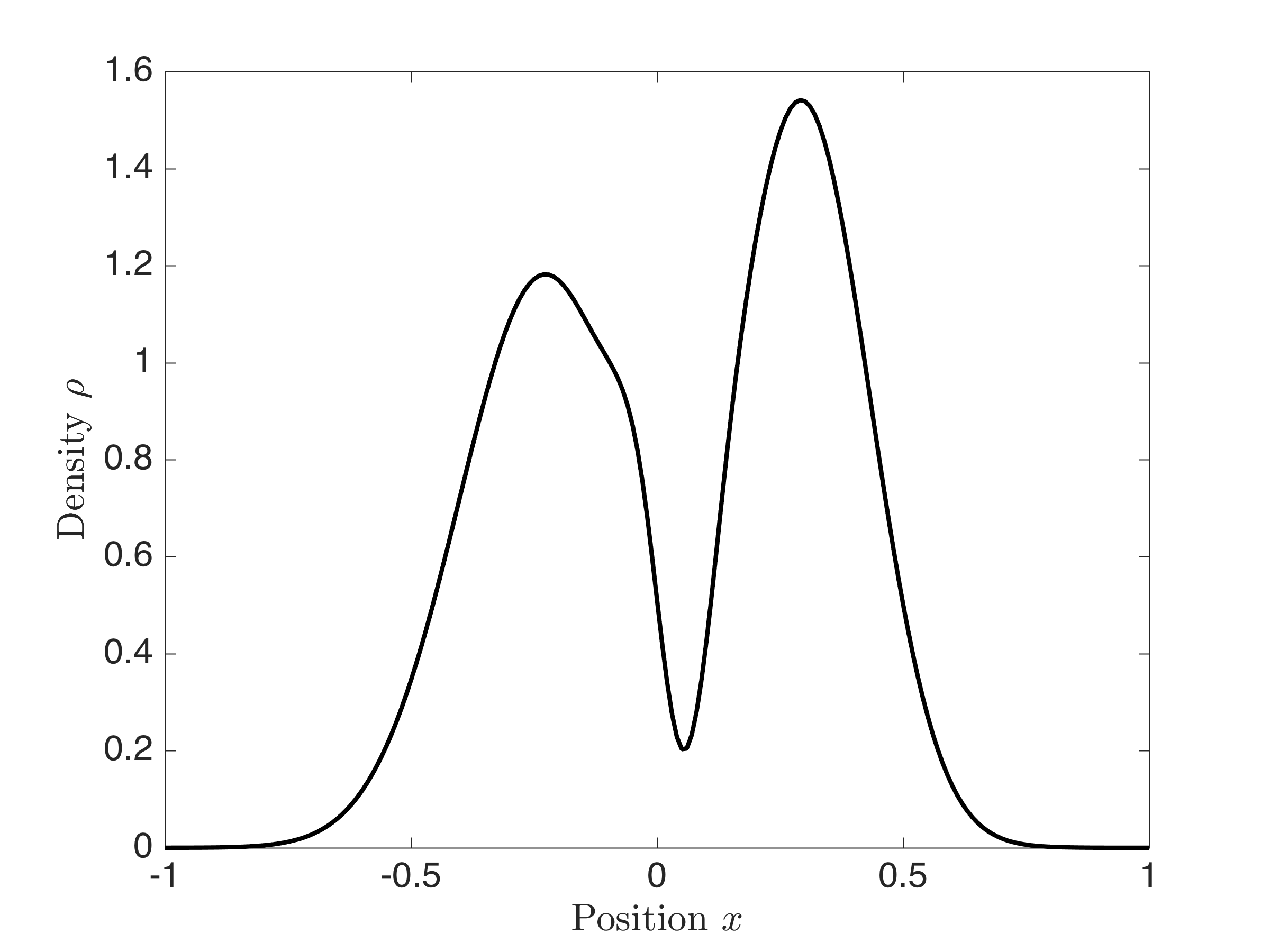}}
 \caption{Barycenters with $\lambda = (1/3, 1/3, 1/3)$}
 \label{fig:eg3barycenter}
\end{figure}
\begin{figure}[h]
\centering
\subfloat[our method]{\includegraphics[width=0.40\textwidth]{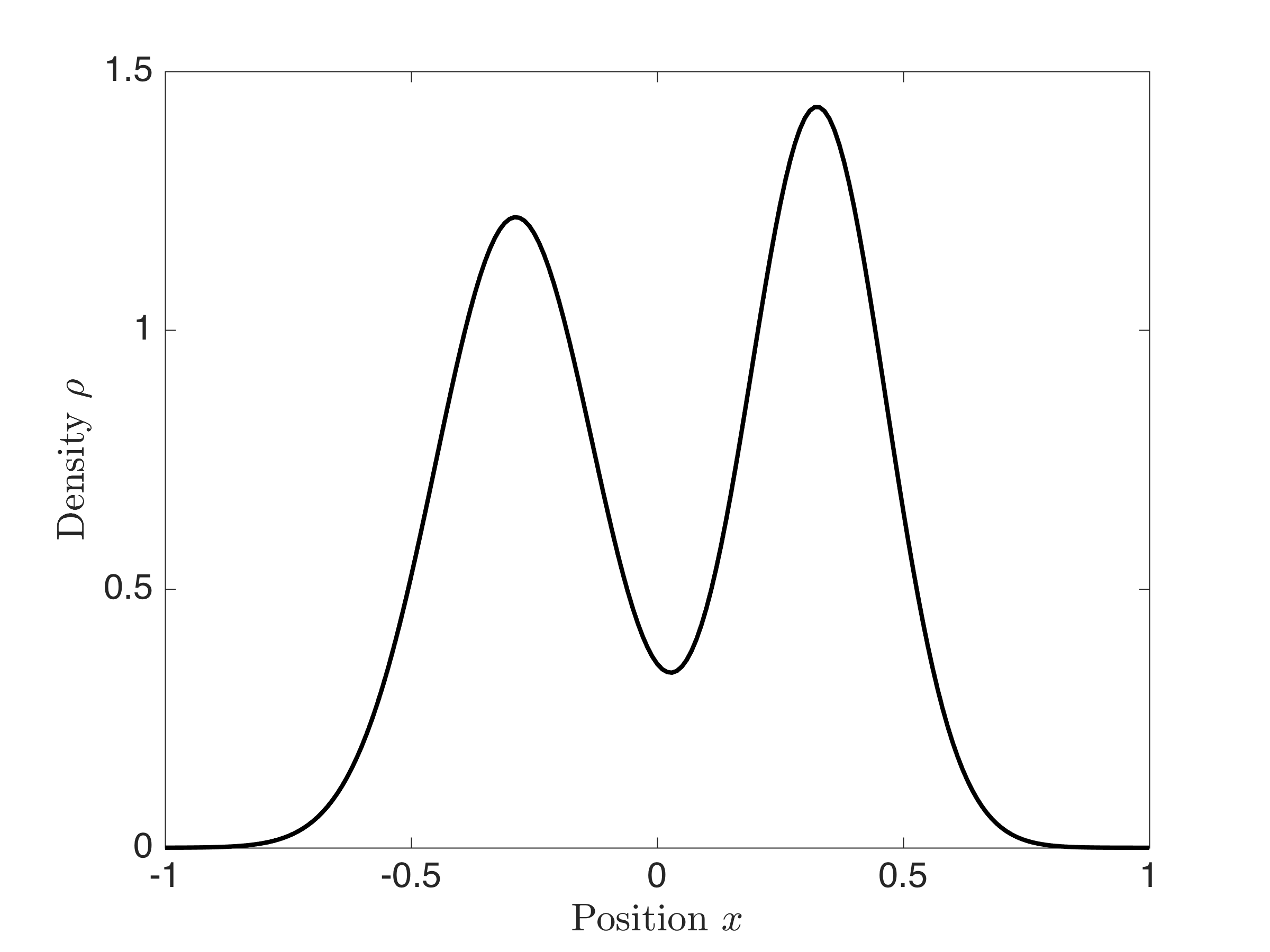}}
\subfloat[optimal transport]{\includegraphics[width=0.40\textwidth]{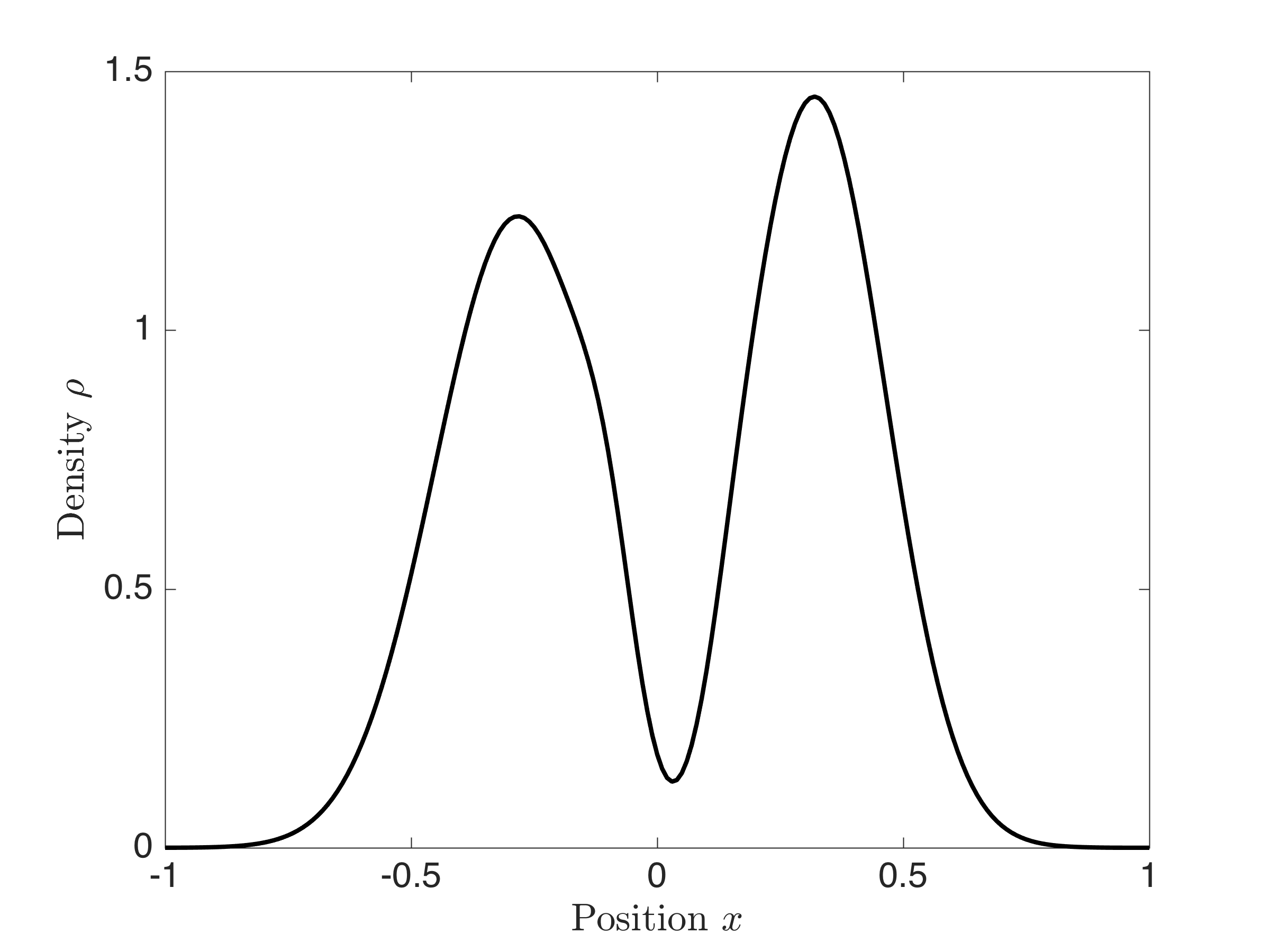}}
 \caption{Barycenters with $\lambda = (1/4, 1/4, 1/2)$}
 \label{fig:eg3barycenter1}
\end{figure}

\section{Conclusion}
In this note,  we have defined a new optimal mass transport distance for Gaussian mixture models by restricting ourselves to the submanifold of Gaussian mixture distributions. Consequently, the geodesic interpolation utilizing this metric remains on the submanifold of Gaussian mixture distributions. On the numerical side, computing this distance between two densities is equivalent to solving a linear programming problem whose number of variables grows linearly as the number of Gaussian components. This is a huge reduction in computational cost compared with traditional OMT. Finally, when the covariances of the components are small, our distance is a very good approximation of the standard OMT distance. The extension to general mixture models or structural models will be an interesting direction in the future.

\section*{Acknowledgements}
This project was supported by AFOSR grants (FA9550-15-1-0045 and FA9550-17-1-0435), grants from the National Center for Research Resources (P41-
RR-013218) and the National Institute of Biomedical Imaging and Bioengineering (P41-EB-015902), NCI grant (1U24CA18092401A1,) and NIA grant (R01 AG053991), and the Breast Cancer Research Foundation.

%

{
\bibliographystyle{IEEEtran}
\bibliography{./refs}
}

\end{document}